\newcommand{\bff}{{\bf f}}
\newcommand{\bxi}{{\mbox{\boldmath $\xi$}}}
\newcommand{\bzet}{{\mbox{\boldmath $\zeta$}}}
\newcommand{\bep}{{\mbox{\boldmath $\epsilon$}}}
\newcommand{\buzet}{{\underline{\mbox{\boldmath $\zeta$}}}}
\newcommand{\bud}{{\underline {\bf d}}}
\newcommand{\buf}{{\underline {\bf f}}}
\newcommand{\bd}{{\bf d}}
\newcommand{\bv}{{\bf v}}
\newcommand{\bw}{{\bf w}}
\newcommand{\bx}{{\bf x}}
\newcommand{\bU}{{\bf U}}
\newcommand{\bQ}{{\bf Q}}
\newcommand{\bS}{{\bf S}}
\newcommand{\bT}{{\bf T}}
\newcommand{\bD}{{\bf D}}
\newcommand{\bV}{{\bf V}}
\newcommand{\bZ}{{\bf Z}}
\newcommand{\R}{I \! \! R}
\newcommand{\N}{I \! \! {N}}
\newcommand{\ute}{{\underline \theta}}
\newcommand{\us}{{\underline s}}
\newcommand{\unu}{{\underline \nu}}
\newcommand{\ux}{{\underline x}}
\newcommand{\uf}{{\underline f}}
\newcommand{\ud}{{\underline d}}
\newcommand{\ue}{{\underline e}}
\newcommand{\uxi}{{\underline{\xi}}}
\newcommand{\uzet}{{\underline{\zeta}}}
\newcommand{\bb}{\begin{eqnarray}}
\newcommand{\be}{\end{eqnarray}}
\title{A diffusion equation for the density of the ratio of two jointly distributed Gaussian variables and the numerical inversion of Laplace transform}
\author{Piero Barone\thanks{Istituto per le Applicazioni del Calcolo ''M. Picone'',C.N.R.Via dei Taurini 19, 00185 Rome, Italy,({\tt p.barone@iac.cnr.it, piero.barone@gmail.com})
}}
\begin{document}

\maketitle

\begin{abstract}
It is shown that the density of the ratio of two  random variables with the same  variance and joint Gaussian density satisfies a  non stationary diffusion equation.
Implications of this result for kernel density estimation of the condensed density of the generalized eigenvalues of a  random matrix pencil useful for the numerical inversion of the Laplace transform is discussed.
\end{abstract}

\begin{keywords}
parabolic equations, random matrices, kernel estimation
\end{keywords}

\begin{AMS}
62G07, 35K05, 65R10
\end{AMS}

\pagestyle{myheadings}
\thispagestyle{plain}

\section*{Introduction}
The density of the ratio of two   random variables with joint bivariate Gaussian density has been derived by several authors and it is important in  many applications (see e.g. \cite{hink,mars1,mars2,pham}). In the sequel it is proved that, when the two variables have the same variance, this density satisfies a parabolic partial differential equation whose  coefficients depend on both the independent variables. The proof is based on standard properties of the confluent hypergeometric functions of the first kind. A motivation for deriving such a PDE is provided by the problem of the numerical inversion of Laplace transform from noisy discrete data \cite{bor,brs}. This is a classical ill-posed problem. Insights for its stable solution can be obtained from knowledge of the marginal densities of the damping factors of a multiexponential model which represents a discretization of the Laplace transform. This problem can be restated in terms of the condensed density of the generalized eigenvalues of a matrix pencil built from the observations. In a recent paper \cite{botev} an adaptive kernel density estimator  based on linear diffusion processes has been proposed which has several advantages over the existing methods. In the sequel  a kernel density estimator in the class considered in \cite{botev},  based on the proposed diffusion equation, for estimating the condensed density mentioned above is proposed.   A Montecarlo simulation allows to appreciate its merits with respect to a Gaussian kernel estimator and its effectiveness for the  numerical inversion of the Laplace transform.

The paper is organized as follows. In the first section the density of the ratio of two   random variables with joint bivariate Gaussian density is shortly derived in terms of confluent hypergeometric functions of the first kind. In the second section the PDE is derived. In the third section the kernel density estimator based on the PDE is derived and the conditions which need to be met by the function whose Laplace transform has to be inverted in order to get good results  are specified. In the last section the merits of the proposed method are  shown by a MonteCarlo simulation.

\section{The density of the ratio of two jointly distributed Gaussian variables}

Let us assume that the random variables $(\bv,\bw)$ have a joint Gaussian density
$$g(v,w)=\frac{1}{2\pi |\Sigma|^{\frac{1}{2}}} e^{-\frac{1}{2}[v-\nu_v, w-\nu_w]\Sigma^{-1}[v-\nu_v, w-\nu_w]^T}$$ with $|\Sigma|>0$ and the mean, the covariance matrix and its inverse are given by:
\begin{eqnarray} \unu=[\nu_v, \nu_w]^T,\;\;\;\;\;\Sigma=
\left[\begin{array}{llll}
 \sigma^2_v & \;\gamma\\
 \gamma & \sigma^2_w\
  \end{array}\right],\;\;\;\;\;\Sigma^{-1}=\frac{1}{\sigma^2_v\sigma^2_w-\gamma^2}
\left[\begin{array}{llll}
 \sigma^2_w & -\gamma\\
 -\gamma & \;\sigma^2_v\
  \end{array}\right].\nonumber
\end{eqnarray}
Let   $_1F_1[\alpha,\beta,z]$  be the confluent hypergeometric function of the first kind. The following lemma holds:
\begin{lemma}
If $a\in\R^+, \;b\in\R,\;\forall n\in\N$
\begin{eqnarray} L_n=\int_{-\infty}^\infty|\lambda|\lambda^n e^{-a\lambda ^2+2b \lambda }d\lambda\nonumber=
\left\{\begin{array}{llll}
 a^{-\frac{2+n}{2}}\Gamma[\frac{2+n}{2}] {_1}F_1\left[\frac{2+n}{2},\frac{1}{2},\frac{b^2}{
  a}\right], \mbox{  $n$  even}\\
 2ba^{-\frac{3+n}{2}}  \Gamma[\frac{3+n}{2}] {_1}F_1\left[\frac{3+n}{2},
   \frac{3}{2}, \frac{b^2}{a}\right], \mbox{ $n$ odd}
  \end{array}\right.
\end{eqnarray}
\label{lem1}
\end{lemma}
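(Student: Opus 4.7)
The plan is to split the integral at zero to eliminate $|\lambda|$, expand $e^{\pm 2b\lambda}$ as a Maclaurin series in $\lambda$, integrate the resulting Gaussian moments term by term, and recognize the two surviving subseries as confluent hypergeometric functions. Positivity of $a$ makes every interchange of sum and integral absolutely convergent.

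First, the substitution $\lambda\mapsto-\lambda$ on the negative half-line gives
\begin{eqnarray}
L_n=\int_0^\infty\lambda^{n+1}e^{-a\lambda^2+2b\lambda}d\lambda+(-1)^n\int_0^\infty\lambda^{n+1}e^{-a\lambda^2-2b\lambda}d\lambda.\nonumber
\end{eqnarray}
Expanding $e^{\pm 2b\lambda}=\sum_{k\geq 0}(\pm 2b)^k\lambda^k/k!$ and invoking the elementary moment $\int_0^\infty\lambda^m e^{-a\lambda^2}d\lambda=\frac{1}{2}a^{-(m+1)/2}\Gamma((m+1)/2)$ reduces $L_n$ to the single series
\begin{eqnarray}
L_n=\frac{1}{2}\sum_{k\geq 0}\bigl(1+(-1)^{n+k}\bigr)\frac{(2b)^k}{k!}a^{-(n+2+k)/2}\Gamma\!\left(\frac{n+2+k}{2}\right).\nonumber
\end{eqnarray}
The parity factor annihilates every term in which $n$ and $k$ have opposite parity, leaving $k=2j$ when $n$ is even and $k=2j+1$ when $n$ is odd.

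To identify each surviving subseries with ${}_1F_1$, I would use the Legendre duplication identities $(2j)!=4^j j!\,(1/2)_j$ and $(2j+1)!=4^j j!\,(3/2)_j$ together with $\Gamma(\alpha+j)=(\alpha)_j\Gamma(\alpha)$. For $n$ even, the subseries collapses to
\begin{eqnarray}
a^{-(n+2)/2}\Gamma\!\left(\frac{n+2}{2}\right)\sum_{j\geq 0}\frac{\bigl((n+2)/2\bigr)_j}{(1/2)_j}\frac{(b^2/a)^j}{j!},\nonumber
\end{eqnarray}
which is precisely $a^{-(n+2)/2}\Gamma\!\left(\frac{n+2}{2}\right){}_1F_1\!\left(\frac{n+2}{2},\frac{1}{2},\frac{b^2}{a}\right)$. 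The odd case is analogous: pulling $2b$ out of $(2b)^{2j+1}=2b\cdot 4^j b^{2j}$ produces the Pochhammer denominator $(3/2)_j$ and the stated prefactor $2b\,a^{-(n+3)/2}\Gamma\!\left(\frac{n+3}{2}\right)$.

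The only delicate step is the combinatorial bookkeeping: correctly converting $(2j)!$ and $(2j+1)!$ into the Pochhammer symbols $(1/2)_j$ and $(3/2)_j$ so that the ratios of Gammas match the canonical defining series $\sum_j(\alpha)_j z^j/\bigl((\beta)_j\,j!\bigr)$ of ${}_1F_1[\alpha,\beta,z]$. Everything else is routine.
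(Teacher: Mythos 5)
Your proof is correct, and the bookkeeping checks out: the parity split gives $L_n=\int_0^\infty\lambda^{n+1}e^{-a\lambda^2+2b\lambda}d\lambda+(-1)^n\int_0^\infty\lambda^{n+1}e^{-a\lambda^2-2b\lambda}d\lambda$; the half-line Gaussian moment together with $(2j)!=4^j j!\,(1/2)_j$, $(2j+1)!=4^j j!\,(3/2)_j$ and $\Gamma(\alpha+j)=(\alpha)_j\Gamma(\alpha)$ does reproduce the defining series of ${_1}F_1$ with exactly the stated prefactors in both parity cases; and since $a>0$ the dominating integrand $\lambda^{n+1}e^{-a\lambda^2+2|b|\lambda}$ is integrable, so the term-by-term integration is legitimate. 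Your route, however, diverges from the paper's after the common first step of splitting at zero: the paper quotes the table formula \cite[3.462,1]{gr}, which expresses $\int_0^\infty\lambda^{n+1}e^{-a\lambda^2+2b\lambda}d\lambda$ through the parabolic cylinder function $D_{-(n+2)}$, then inserts the standard decomposition of $D_{-(n+2)}$ into a combination of ${_1}F_1[\cdot,\frac{1}{2},\cdot]$ and ${_1}F_1[\cdot,\frac{3}{2},\cdot]$; forming the sum $g(\lambda)+g(-\lambda)$ or difference $g(\lambda)-g(-\lambda)$ cancels one of the two terms according to the parity of $n$, and duplication-type Gamma identities yield the final constants. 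Your series-expansion argument is more elementary and self-contained (no parabolic cylinder functions, no table lookup), at the price of the Pochhammer bookkeeping you flag as the delicate step; the paper's argument is computationally lighter but rests on two quoted special-function identities. Both constitute complete proofs of the lemma.
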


{\em Proof}.
Let us define  $g(\lambda)=e^{-a\lambda ^2+2b \lambda }$, then if $n$ is even
\begin{eqnarray} \int_{-\infty}^\infty|\lambda|\lambda^n g(\lambda)d\lambda=
\int_{-\infty}^0|\lambda^{n+1}| g(\lambda)d\lambda+\int_0^{\infty}\lambda^{n+1} g(\lambda)d\lambda= \nonumber\\
 \int_0^{\infty}\lambda^{n+1} g(-\lambda)d\lambda+\int_0^{\infty}\lambda^{n+1} g(\lambda)d\lambda=\int_0^{\infty}\lambda^{n+1} [g(\lambda)+ g(-\lambda)]d\lambda\nonumber
\end{eqnarray}
if  $n$ is odd
\begin{eqnarray} \int_{-\infty}^\infty|\lambda|\lambda^n g(\lambda)d\lambda=
-\int_{-\infty}^0|\lambda^{n+1}| g(\lambda)d\lambda+\int_0^{\infty}\lambda^{n+1} g(\lambda)d\lambda= \nonumber\\
- \int_0^{\infty}\lambda^{n+1} g(-\lambda)d\lambda+\int_0^{\infty}\lambda^{n+1} g(\lambda)d\lambda=
 \int_0^{\infty}\lambda^{n+1}[ g(\lambda)-g(-\lambda)]d\lambda\nonumber.
\end{eqnarray}
But (see e.g. \cite[3.462,1]{gr})
$$\int_0^{\infty}\lambda^{n+1}g(\lambda)d\lambda=(2 a)^{-\frac{n+2}{2}}\Gamma(n+2)e^{\frac{4 b^2}{8a}}D_{-(n+2)}\left(\frac{-\sqrt{2} b}{\sqrt{ a}}\right)$$
where the parabolic cylinder function $D_{-(n+2)}(z)$ is given by
$$D_{-(n+2)}(z)=2^{-\frac{n+2}{2}}e^{-\frac{z^2}{4}}\left(\frac{\sqrt{\pi}}{\Gamma\left(\frac{n+3}{2}\right)} {_1}F_1\left[\frac{n+2}{2},\frac{1}{2},\frac{z^2}{2}\right] -
\frac{\sqrt{2\pi}z}{\Gamma\left(\frac{n+2}{2}\right)} {_1}F_1\left[\frac{n+3}{2},\frac{3}{2},\frac{z^2}{2}\right]
 \right)$$
hence we get
$$\int_0^{\infty}\lambda^{n+1}g(\lambda)d\lambda=$$
$$2^{-(n+2)}\sqrt{\pi}a^{-\frac{n+2}{2}}\Gamma(n+2)
\left(\frac{1}{\Gamma\left(\frac{n+3}{2}\right)} {_1}F_1\left[\frac{n+2}{2},\frac{1}{2},\frac{ b^2}{ a}\right] +
\frac{2b}{\sqrt{a}\Gamma\left(\frac{n+2}{2}\right)} {_1}F_1\left[\frac{n+3}{2},\frac{3}{2},\frac{ b^2}{ a}\right]
 \right)$$
 and
 $$\int_0^{\infty}\lambda^{n+1} [g(\lambda)+ g(-\lambda)]d\lambda=$$
 $$2^{-(n+2)}\sqrt{\pi}a^{-\frac{n+2}{2}}\Gamma(n+2)
\left(\frac{2}{\Gamma\left(\frac{n+3}{2}\right)} {_1}F_1\left[\frac{n+2}{2},\frac{1}{2},\frac{ b^2}{ a}\right]\right)$$
 $$\int_0^{\infty}\lambda^{n+1} [g(\lambda)- g(-\lambda)]d\lambda=$$
 $$2^{-(n+2)}\sqrt{\pi}a^{-\frac{n+3}{2}}\Gamma(n+2)
 \left(\frac{4b}{\Gamma\left(\frac{n+2}{2}\right)} {_1}F_1\left[\frac{n+3}{2},\frac{3}{2},\frac{ b^2}{ a}\right]
 \right)$$
 but
 $$\frac{\sqrt{\pi}\Gamma(n+2)}{2^{(n+1)}\Gamma\left(\frac{n+3}{2}\right)}=\Gamma\left(\frac{n+2}{2}\right)$$
 $$\frac{\sqrt{\pi}\Gamma(n+2)}{2^{(n+1)}\Gamma\left(\frac{n+2}{2}\right)}=\Gamma\left(\frac{n+3}{2}\right).\endproof$$
 We can get the density of the ratio of Gaussian variables as a simple consequence of this Lemma (see also \cite{pham}):
\begin{theorem}
If $\Sigma>0$, the density of the ratio $\bx=\frac{\bw}{\bv}$ is  given by:
\begin{eqnarray}h(x)=\frac{e^{-c}}{2\pi |\Sigma|^{\frac{1}{2}}a(x)} {_1}F_1\left[1, \frac{1}{2}, \frac{b(x)^2}{a(x)}\right]
\end{eqnarray}
 where
\begin{eqnarray}
a(x)&=&\frac{1}{2|\Sigma|}\left(\sigma^2_w -2 \gamma x+\sigma^2_v x^2\right)\nonumber\\
b(x)&=&\frac{1}{2|\Sigma|}( \sigma^2_w \nu_v- \gamma \nu_w -\gamma \nu_v x + \sigma^2_v \nu_w x)\nonumber\\
c&=&\frac{1}{2|\Sigma|}\left(\sigma^2_w \nu_v^2-2 \gamma\nu_w \nu_v+\sigma^2_v \nu_w^2\right)\nonumber \\
|\Sigma|&=&\sigma^2_v\sigma^2_w-\gamma^2\nonumber
\end{eqnarray}
and $a(x)>0 \;\forall x, \;\;c>0$.
\end{theorem}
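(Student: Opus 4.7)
The plan is to compute $h(x)$ by the standard change of variables. Setting $\bx=\bw/\bv$ with $\bv$ kept as an auxiliary variable, the Jacobian of $(v,w)\mapsto(v,x)$ equals $|v|$, so after marginalizing out $v$ one gets
\begin{eqnarray}
h(x)=\int_{-\infty}^{\infty}|v|\, g(v,xv)\,dv.\nonumber
\end{eqnarray}
Thus the whole problem reduces to evaluating a single integral of the type handled by Lemma \ref{lem1}.

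The first step is to expand the quadratic form in the exponent of $g(v,xv)$ as a polynomial in $v$. Substituting $w=xv$ into the quadratic form $[v-\nu_v,w-\nu_w]\Sigma^{-1}[v-\nu_v,w-\nu_w]^T$ and using the explicit expression for $\Sigma^{-1}$, one obtains after routine collection of powers of $v$ an expression of the shape $2 a(x)\,v^2 - 4 b(x)\, v + 2c$, where $a(x)$, $b(x)$, $c$ are precisely the coefficients defined in the statement. (The coefficient of $v^2$ comes from the pure quadratic part in $(v,xv)$, the linear coefficient collects the cross terms with $\nu_v,\nu_w$, and $c$ is the constant term evaluated at $(\nu_v,\nu_w)$.) Consequently
\begin{eqnarray}
g(v,xv)=\frac{1}{2\pi|\Sigma|^{1/2}}\,e^{-c}\,e^{-a(x)v^2+2b(x)v}.\nonumber
\end{eqnarray}

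The second step is to plug this into the integral for $h(x)$. Pulling the $x$-independent constant $e^{-c}/(2\pi|\Sigma|^{1/2})$ outside, what remains is exactly $L_0$ from Lemma \ref{lem1} with $a=a(x)$ and $b=b(x)$. Applying the even-$n$ branch of Lemma \ref{lem1} with $n=0$ gives $L_0 = a(x)^{-1}\Gamma[1]\,{_1}F_1[1,1/2,b(x)^2/a(x)]$, which yields the claimed formula for $h(x)$.

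The remaining items are the sign assertions, and these are the only places one has to think. For $a(x)>0$: the quadratic $\sigma_v^2 x^2 - 2\gamma x + \sigma_w^2$ has discriminant $4(\gamma^2-\sigma_v^2\sigma_w^2)=-4|\Sigma|<0$ under the assumption $\Sigma>0$, hence it is strictly positive for all real $x$, which is needed both for Lemma \ref{lem1} to apply (it requires $a\in\R^+$) and for $h(x)$ to be well defined. For $c>0$: one recognizes $c=\tfrac{1}{2}\,\unu^T\Sigma^{-1}\unu$, and positive definiteness of $\Sigma^{-1}$ gives $c\ge 0$ with strict inequality whenever $\unu\neq \uz$. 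No conceptual obstacle is expected; the only nontrivial piece is the bookkeeping in matching the coefficients of $v^2,v,1$ in the expanded quadratic form to $a(x)$, $b(x)$, $c$ as given.
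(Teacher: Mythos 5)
Your proof is correct and follows essentially the same route as the paper: the paper's delta-function representation with the change of variables $\lambda=v,\ \mu=w/v$ produces exactly your integral $\int_{-\infty}^{\infty}|v|\,g(v,xv)\,dv$, the exponent is expanded to $-a(x)v^2+2b(x)v-c$ in the same way, and both arguments then invoke Lemma \ref{lem1} with $n=0$ and the discriminant argument for $a(x)>0$. Your treatment of $c$ via $c=\tfrac12\unu^T\Sigma^{-1}\unu$ is in fact slightly more careful than the paper's, which asserts $c>0$ unconditionally although equality occurs when $\unu=\uz$ (the case used in the paper's own Corollary).
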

\begin{proof}
The density of the ratio $\bx=\frac{\bw}{\bv}$ can be written as:
$$h(x)=\frac{1}{2\pi |\Sigma|^{\frac{1}{2}}} \int_{-\infty}^\infty\int_{-\infty}^\infty \delta\left(x-\frac{w}{v}\right) e^{-\frac{1}{2}[v-\nu_v, w-\nu_w]\Sigma^{-1}[v-\nu_v, w-\nu_w]^T} dvdw.$$
By the change of variables
$\lambda=v,\;\;\mu=\frac{w}{v}$ with Jacobian  $|\lambda|$ we get
\begin{eqnarray}h(x)&=&\frac{1}{2\pi |\Sigma|^{\frac{1}{2}}}\int_{-\infty}^\infty\int_{-\infty}^\infty |\lambda| \delta\left(x-\mu\right) e^{-\frac{1}{2}[\lambda-\nu_v, \lambda\mu-\nu_w]\Sigma^{-1}[\lambda-\nu_v, \lambda\mu-\nu_w]^T}
d\lambda d\mu\nonumber \\ &=&\frac{1}{2\pi |\Sigma|^{\frac{1}{2}}} \int_{-\infty}^\infty|\lambda|e^{-\frac{1}{2}[\lambda-\nu_v, \lambda x-\nu_w]\Sigma^{-1}[\lambda-\nu_v, \lambda x-\nu_w]^T}d\lambda\nonumber \\ &=&
\frac{1}{2\pi |\Sigma|^{\frac{1}{2}}} \int_{-\infty}^\infty|\lambda|e^{-a(x)\lambda ^2+2b(x) \lambda -c}d\lambda\nonumber
\end{eqnarray}
Moreover $a(x)>0 \;\forall x$ because the quadratic equation in $x$ $$\sigma^2_w -2 \gamma x+\sigma^2_v x^2=0$$ has no real roots as $\Sigma>0$, hence, by Lemma \ref{lem1}, with $n=0$
\begin{eqnarray}h(x)&=&\frac{e^{-c}}{2\pi |\Sigma|^{\frac{1}{2}}} \int_{-\infty}^\infty|\lambda|e^{-a(x)\lambda ^2+2b(x) \lambda }d\lambda\nonumber=\frac{e^{-c}}{2\pi |\Sigma|^{\frac{1}{2}}a(x)} {_1}F_1\left[1, \frac{1}{2}, \frac{b(x)^2}{a(x)}\right].
\end{eqnarray}
Finally we notice that $c>0$ as $\sigma^2_w \nu_v^2-2 \gamma\nu_w \nu_v+\sigma^2_v \nu_w^2\ge (\sigma_w \nu_v-\sigma_v \nu_w)^2>0$, because $\Sigma>0.$ \end{proof}
\begin{corollary}
If $\nu_v=\nu_w=0$ and $\Sigma=\sigma^2 I$ then $h(x)=\frac{1}{\pi(x^2+1)}$.
\end{corollary}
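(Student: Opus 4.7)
The plan is to specialize the formula in the theorem to the case $\nu_v=\nu_w=0$, $\sigma_v^2=\sigma_w^2=\sigma^2$, $\gamma=0$, and simplify. Since the theorem gives the full density in closed form in terms of $a(x)$, $b(x)$, $c$ and ${}_1F_1$, the proof reduces to a direct substitution plus a boundary-value evaluation of the confluent hypergeometric function.

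First I would compute the three auxiliary quantities. With $\gamma=0$ and both variances equal to $\sigma^2$, we get $|\Sigma|=\sigma^4$ and
\begin{eqnarray}
a(x)=\frac{1}{2\sigma^4}\left(\sigma^2+\sigma^2 x^2\right)=\frac{1+x^2}{2\sigma^2}.\nonumber
\end{eqnarray}
Because every term in the expression for $b(x)$ carries a factor of $\nu_v$ or $\nu_w$, setting the means to zero yields $b(x)\equiv 0$, and analogously $c=0$.

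Next I would invoke the standard identity ${}_1F_1[\alpha,\beta,0]=1$ at the argument $b(x)^2/a(x)=0$, so that the hypergeometric factor in the formula of the theorem is simply $1$. Substituting into
\begin{eqnarray}
h(x)=\frac{e^{-c}}{2\pi|\Sigma|^{1/2}\,a(x)}\,{}_1F_1\!\left[1,\tfrac{1}{2},\tfrac{b(x)^2}{a(x)}\right]\nonumber
\end{eqnarray}
gives
\begin{eqnarray}
h(x)=\frac{1}{2\pi\,\sigma^2}\cdot\frac{2\sigma^2}{1+x^2}=\frac{1}{\pi(1+x^2)},\nonumber
\end{eqnarray}
which is the claimed Cauchy density.

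There is no real obstacle here; the only things to be careful about are the cancellation of the $\sigma^2$ factors (the $|\Sigma|^{1/2}=\sigma^2$ in the denominator cancels against the $\sigma^{-2}$ hidden in $1/a(x)$, so $\sigma$ drops out entirely, as expected from scale invariance of the ratio) and the evaluation of ${}_1F_1$ at zero. Both are immediate, so the corollary follows as a one-line specialization of the theorem.
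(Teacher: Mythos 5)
Your proof is correct and follows essentially the same route as the paper: substitute $\nu_v=\nu_w=0$, $\gamma=0$, $\sigma_v^2=\sigma_w^2=\sigma^2$ into the theorem's formula, note $a(x)=\frac{1+x^2}{2\sigma^2}$, $b(x)=0$, $c=0$, $|\Sigma|=\sigma^4$, and use ${}_1F_1[1,\tfrac{1}{2},0]=1$. Nothing is missing; the cancellation of $\sigma^2$ you point out is exactly what the paper's computation relies on.
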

\begin{proof}
In the considered case we have
\begin{eqnarray}
a(x)=\frac{1+x^2}{2\sigma^2},\;\;b(x)=0,\;\;c=0,\;\;|\Sigma|=\sigma^4,
\nonumber
\end{eqnarray}
hence
\begin{eqnarray}h(x)=\frac{1}{2\pi \sigma^2a(x)} {_1}F_1\left[1, \frac{1}{2}, 0\right]=\frac{1}{\pi (1+x^2)}.
\end{eqnarray}
\end{proof}

\section{The diffusion equation for the density of the ratio of two jointly distributed Gaussian variables}

Let us assume that
\begin{eqnarray} \Sigma=
\left[\begin{array}{llll}
 \sigma^2 & \;\gamma\\
 \gamma & \;\sigma^2\
  \end{array}\right]=\sigma^2\left[\begin{array}{llll}
 1 & \;\rho\\
 \rho &\; 1\
  \end{array}\right],\;\;\;|\rho|< 1\nonumber
\end{eqnarray}
and define $t=\sigma^2$. By making explicit the dependence on $t$ in  $a(x),b(x),c,|\Sigma|,h(x)$ we get
\begin{eqnarray}
a(x,t)=\frac{1-2\rho x +x^2}{2(1-\rho^2)t}\nonumber\\
b(x,t)=\frac{ \nu_v- \rho \nu_w  +  (\nu_w -\rho \nu_v )x}{2(1-\rho^2)t}\label{abc}\\
c(t)=\frac{ \nu_v^2-2 \rho\nu_w \nu_v+ \nu_w^2}{2(1-\rho^2)t} \nonumber\\
d(t)=|\Sigma|=(1-\rho^2)t^2\nonumber
\end{eqnarray}
and
\begin{eqnarray}h(x,t)=\frac{e^{-c(t)}}{2\pi d(t)^{\frac{1}{2}}a(x,t)} {_1}F_1\left[1, \frac{1}{2}, \frac{b(x,t)^2}{a(x,t)}\right]. \label{hform}
\end{eqnarray}
\noindent\underline{Remark}.
We notice that $$h(x,t;\nu_v,\nu_w,\rho)=h(x,\alpha^2 t;\alpha\nu_v,\alpha\nu_w,\rho), \;\forall \alpha\in\R.$$ Therefore if $\nu_v\neq 0$ and $\alpha=\frac{1}{\nu_v}$ we have \begin{eqnarray}h(x,t;\nu_v,\nu_w,\rho)=
h(x,\frac{t}{\nu_v^2};1,\frac{\nu_w}{\nu_v},\rho).\label{hform1}\end{eqnarray}
We have
\begin{theorem}
$$\lim_{t\rightarrow \infty}h(x,t)=\frac{\sqrt{1-\rho ^2}}{\pi  \left(x^2-2 x \rho
   +1\right)}=h(x,t;0,0,\rho)$$
   and, if $\nu_v\neq 0$,
$$\lim_{t\rightarrow 0^+}h(x,t)=\delta\left(x-\frac{\nu_w}{\nu_v}\right)
$$in the weak sense.
\label{teo22}
\end{theorem}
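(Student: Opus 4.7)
The plan is to handle the two limits separately, both directly from the explicit formula (\ref{hform}) and the expressions (\ref{abc}).

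\textbf{Large-$t$ limit.} From (\ref{abc}) one reads off that $a(x,t)$, $b(x,t)$ and $c(t)$ are all $O(1/t)$, while $d(t)^{1/2}=\sqrt{1-\rho^2}\,t$. Hence $e^{-c(t)}\to 1$, and $b(x,t)^2/a(x,t)=O(1/t)\to 0$, so ${_1}F_1[1,\frac{1}{2},b^2/a]\to {_1}F_1[1,\frac{1}{2},0]=1$. The only nontrivial step is to observe that the prefactor is actually $t$-independent:
$$\frac{1}{2\pi\, d(t)^{1/2}\,a(x,t)}=\frac{1}{2\pi\sqrt{1-\rho^2}\,t}\cdot\frac{2(1-\rho^2)t}{1-2\rho x+x^2}=\frac{\sqrt{1-\rho^2}}{\pi(1-2\rho x+x^2)}.$$
Equality with $h(x,t;0,0,\rho)$ then follows immediately, since setting $\nu_v=\nu_w=0$ in (\ref{abc}) forces $b\equiv 0$ and $c\equiv 0$ in (\ref{hform}), leaving exactly the same prefactor times $1$.

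\textbf{Small-$t$ limit.} I would argue probabilistically, which gives the cleanest proof of weak convergence. The pair $(\bv,\bw)$ has fixed mean $(\nu_v,\nu_w)$ and covariance matrix $\Sigma$ proportional to $t$ (as set at the start of this section), so $(\bv,\bw)\to(\nu_v,\nu_w)$ in probability as $t\to 0^+$ (the $L^2$ norm of the deviation is of order $\sqrt{t}$). Because $\nu_v\neq 0$, the map $(v,w)\mapsto w/v$ is continuous at this point, so the continuous mapping theorem gives $\bx\to \nu_w/\nu_v$ in probability and hence in distribution. Convergence in distribution to a constant is precisely weak convergence of the corresponding densities to $\delta(x-\nu_w/\nu_v)$.

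\textbf{Analytical alternative and the main obstacle.} If one insists on arguing directly from (\ref{hform}), I expect this to be the more laborious route. It would use the asymptotic ${_1}F_1[1,\frac{1}{2},z]\sim\sqrt{\pi z}\,e^z$ as $z\to+\infty$ together with the algebraic identity
$$c(t)-\frac{b(x,t)^2}{a(x,t)}=\frac{(\nu_v x-\nu_w)^2}{2t\,(1-2\rho x+x^2)},$$
which one verifies by expanding $(\nu_v^2-2\rho\nu_v\nu_w+\nu_w^2)(1-2\rho x+x^2)-[(\nu_v-\rho\nu_w)+(\nu_w-\rho\nu_v)x]^2$ and observing that it collapses to $(1-\rho^2)(\nu_v x-\nu_w)^2$. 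Combined with the above asymptotic, this identity forces $h(x,t)\to 0$ pointwise for every $x\neq \nu_w/\nu_v$; together with $\int h(x,t)\,dx=1$ it gives weak convergence to $\delta(x-\nu_w/\nu_v)$. The main obstacle on this route is precisely the bookkeeping in that algebraic simplification; the probabilistic argument bypasses it entirely.
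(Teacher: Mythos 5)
Your proof is correct, but it takes a genuinely different route from the paper's, especially for the $t\rightarrow 0^+$ limit. For $t\rightarrow\infty$ you argue directly from the representation (\ref{hform}): since $a(x,t)$, $b(x,t)$, $c(t)$ are all $O(1/t)$ while $d(t)^{1/2}=\sqrt{1-\rho^2}\,t$, the prefactor $\left(2\pi d(t)^{1/2}a(x,t)\right)^{-1}$ is $t$-independent and the remaining factors tend to $1$ by continuity of ${}_1F_1[1,\frac{1}{2},\cdot]$ at $0$; the paper instead first rewrites $h(x,t)$ in closed form involving the error function and passes to the limit there. Both are fine, and your version of this half is if anything more economical. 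For $t\rightarrow 0^+$ the paper stays analytic: it keeps the erf representation, isolates the dominant term $h_0(x,t)$, and concludes weak convergence to $\delta\left(x-\frac{\nu_w}{\nu_v}\right)$ by a scaling/approximate-identity argument citing Stein--Weiss. You instead go back to the probabilistic meaning of $h$ as the density of $\bw/\bv$ (Theorem 1), note that the covariance matrix is $t$ times the fixed correlation matrix so that $(\bv,\bw)\rightarrow(\nu_v,\nu_w)$ in probability by Chebyshev, and apply the continuous mapping theorem at $(\nu_v,\nu_w)$, where the map $(v,w)\mapsto w/v$ is continuous because $\nu_v\neq 0$; convergence in distribution to the constant $\nu_w/\nu_v$ is exactly the asserted weak convergence against continuous (compactly supported or bounded) test functions. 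This buys a shorter and arguably more robust argument, bypassing the scaling step of the paper. One caution: the purely analytic alternative you sketch at the end is not sufficient as stated, because pointwise convergence of $h(\cdot,t)$ to $0$ away from $x=\frac{\nu_w}{\nu_v}$ together with $\int h(x,t)\,dx=1$ does not by itself force the unit mass to concentrate at $\frac{\nu_w}{\nu_v}$ (mass could in principle escape to infinity), so that route would additionally require a tightness or local-concentration estimate; since your main argument does not rely on it, the proof stands.
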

\begin{proof}
The density $h(x,t)$ can be rewritten as
\begin{eqnarray*}h(x,t)&=&\frac{e^{-\frac{(\nu_w-\nu_v x)^2}{2 t
   \left(x^2-2 x \rho +1\right)}}}{\sqrt{2 \pi t \left(x^2-2 x \rho
   +1\right)}}\frac{\nu_v (1-\rho x)  +\nu_w (x-\rho ) }{ x^2-2 x
   \rho +1 }
   \mbox{erf}\left[\frac{\nu_v (1-\rho x)
   +\nu_w (x-\rho )}{\sqrt{2 t \left(1-\rho ^2\right) \left(x^2-2 x \rho
   +1\right)}}\right] \\
   &+& \frac{\sqrt{1-\rho ^2}}{\pi  \left(x^2-2 x \rho
   +1\right)} e^{\frac{-\nu_v^2+2 \nu_v \nu_w \rho
   -\nu_w^2}{2 t \left(1-\rho ^2\right)}}. \nonumber
   \end{eqnarray*}
Taking the limit $t\rightarrow \infty$ in this expression we get the first equality in the first part of the thesis. The second equality is obtained by substituting $\nu_v=\nu_w=0$ in equation (\ref{hform}). To prove the second part we notice that
$$\lim_{t\rightarrow 0^+}h(x,t)=\lim_{t\rightarrow 0^+}\frac{e^{-\frac{(\nu_w-\nu_v x)^2}{2 t
   \left(x^2-2 x \rho +1\right)}}}{\sqrt{2 \pi t \left(x^2-2 x \rho
   +1\right)}}\frac{\nu_v (1-\rho x)  +\nu_w (x-\rho ) }{ x^2-2 x
   \rho +1 }=\lim_{t\rightarrow 0^+}h_0(x,t)$$
and
$$h_0(x,t)=\frac{1}{\sqrt{t}}\tilde{h}_0\left(\frac{x}{\sqrt{t}},t\right) \mbox{  where  $\tilde{h}_0(x,t)$ is such that } \int_{-\infty}^{\infty} \tilde{h}_0(x,t)dx=1.$$
But then
$$\lim_{t\rightarrow 0^+}\int_{-\infty}^\infty h_0(x,t)F\left(x-\frac{\nu_w}{\nu_v}\right)dx=F\left(\frac{\nu_w}{\nu_v}\right)$$
holds for all continuous compactly supported functions $F$, and so $h(x,t)$ converges weakly to $\delta\left(x-\frac{\nu_w}{\nu_v}\right)$ in the sense of measures
( \cite[Theorem 1.18]{stwe}).
\end{proof}

\noindent The properties of $h(x,t)$ stated above  suggest, when $\nu_v\neq 0$, the existence of a diffusion equation ruling the behavior of $h(x,t)$ for varying $t$ (when $\nu_v = \nu_w= 0,$ $h(x,t)$ does not depend on $t$). To prove that this is indeed the case we need
the following Lemmas:
\begin{lemma}
If $\nu_v\neq 0$ and $|\rho|<1$ then
$$h_t(x,t)=\frac{e^{-c(t)}}{2\pi }\left[ A^{(t)}(x,t)L_2(x,t) +B^{(t)}(x,t)L_1(x,t)+C^{(t)}(t)L_0(x,t)\right]$$
$$h_x(x,t)=\frac{e^{-c(t)}}{2\pi }\left[ A^{(x)}(x,t)L_2(x,t) +B^{(x)}(x,t)L_1(x,t) \right]$$
$$h_{xx}(x,t)=\frac{e^{-c(t)}}{2\pi }\left[ E^{(xx)}(x,t)L_4(x,t) +F^{(xx)}(x,t)L_3(x,t) +A^{(xx)}(x,t)L_2(x,t)\right]$$
where
$$A^{(t)}(x,t)=\frac{1 + x^2 - 2 x \rho}{2 t^3 (1 - \rho^2)^{3/2}}$$
$$B^{(t)}(x,t)=-\frac{\nu_v + \nu_w x - (\nu_w + \nu_v x) \rho}{t^3 (1 - \rho^2)^{3/2}}$$
$$C^{(t)}(t)= \frac{\nu_v^2 + \nu_w^2 - 2 \nu_v \nu_w \rho +
 2 t (-1 + \rho^2)}{2 t^3 (1 - \rho^2)^{3/2}}$$
$$A^{(x)}(x,t)=\frac{\rho-x}{t^2 (1 - \rho^2)^{3/2}}$$
$$B^{(x)}(x,t)=\frac{\nu_w - \nu_v \rho}{t^2 (1 - \rho^2)^{3/2}}$$
$$E^{(xx)}(x,t)=\frac{(x - \rho)^2}{t^3 (1 - \rho^2)^{5/2}}$$
$$F^{(xx)}(x,t)=\frac{2 (x - \rho) (-\nu_w + \nu_v \rho)}{t^3 (1 - \rho^2)^{5/2}}$$
$$A^{(xx)}(x,t)=\frac{(\nu_w - \nu_v \rho)^2 + t (\rho^2-1)}{t^3 (1 - \rho^2)^{5/2}}.$$
\label{lem2}
\end{lemma}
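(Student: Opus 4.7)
The plan is to express $h$ as a scalar prefactor times an integral of the type studied in Lemma \ref{lem1}, and then differentiate under the integral sign. Taking $n=0$ in Lemma \ref{lem1} gives
$$h(x,t) = \frac{e^{-c(t)}}{2\pi\,d(t)^{1/2}}\, L_0(x,t), \qquad L_n(x,t):=\int_{-\infty}^{\infty}|\lambda|\lambda^n e^{-a(x,t)\lambda^2 + 2b(x,t)\lambda}\,d\lambda.$$
Since $|\rho|<1$ forces $a(x,t)>0$ for all $x$, the Gaussian factor dominates any polynomial prefactor in $\lambda$, and differentiation in $x$ or $t$ can be carried out inside the integral any number of times.

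For the first-order derivatives, I would pass $\partial_t$ and $\partial_x$ inside, obtaining
$$\partial_t L_0 = -a_t L_2 + 2b_t L_1,\qquad \partial_x L_0 = -a_x L_2 + 2b_x L_1.$$
The $x$-derivative leaves the prefactor alone, so $h_x = \frac{e^{-c}}{2\pi\sqrt{d}}(-a_x L_2 + 2b_x L_1)$ and one reads off $A^{(x)}=-a_x/\sqrt{d}$, $B^{(x)}=2b_x/\sqrt{d}$. For $h_t$ one must also differentiate the prefactor; with $d(t)=(1-\rho^2)t^2$ one has $d'(t)/[2d(t)]=1/t$, so the coefficient of $L_0$ collapses to $-c'(t)-1/t$, yielding $C^{(t)}=(-c'-1/t)/\sqrt{d}$, $A^{(t)}=-a_t/\sqrt{d}$, $B^{(t)}=2b_t/\sqrt{d}$.

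For $h_{xx}$ I would apply $\partial_x$ once more inside the integral. The product rule produces both a squared-gradient term and a second-derivative-of-exponent term,
$$\partial_{xx} L_0 = \int_{-\infty}^{\infty} |\lambda|\,e^{-a\lambda^2+2b\lambda}\left[\left(-a_x\lambda^2+2b_x\lambda\right)^2+\left(-a_{xx}\lambda^2+2b_{xx}\lambda\right)\right]d\lambda.$$
Since $b(x,t)$ is affine in $x$ we have $b_{xx}=0$, and expanding the square groups the surviving monomials into $L_4,L_3,L_2$:
$$\partial_{xx}L_0 = a_x^2 L_4 - 4a_x b_x L_3 + (4b_x^2-a_{xx})L_2,$$
so $E^{(xx)}=a_x^2/\sqrt{d}$, $F^{(xx)}=-4a_x b_x/\sqrt{d}$, $A^{(xx)}=(4b_x^2-a_{xx})/\sqrt{d}$.

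The remaining step is purely algebraic: substitute the explicit formulas (\ref{abc}) for $a,b,c,d$ and their partial derivatives into the eight expressions above and verify that they reduce to the stated rational functions of $x,t,\rho,\nu_v,\nu_w$. This bookkeeping --- tracking powers of $t$, of $(1-\rho^2)$, and collecting the mixed terms produced by the expansion of $(-a_x\lambda^2+2b_x\lambda)^2$ --- is the main obstacle, though it contains no real conceptual difficulty. I also note that the hypothesis $\nu_v\neq 0$ in the statement is not actually used in the derivation; it is presumably included so that the formulas feed directly into Theorem \ref{teo22}.
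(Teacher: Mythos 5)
Your proposal is correct and follows essentially the same route as the paper: differentiate under the integral sign in the representation $h=\frac{e^{-c}}{2\pi\sqrt{d}}L_0$, collect the powers of $\lambda$ multiplying the exponential, and identify the resulting integrals with $L_1,\dots,L_4$; your coefficient formulas $-a_t/\sqrt{d}$, $2b_t/\sqrt{d}$, $(-c'-1/t)/\sqrt{d}$, $-a_x/\sqrt{d}$, $2b_x/\sqrt{d}$, $a_x^2/\sqrt{d}$, $-4a_xb_x/\sqrt{d}$, $(4b_x^2-a_{xx})/\sqrt{d}$ reduce exactly to the stated expressions. The only cosmetic difference is that the paper carries out the $t$-differentiation on the delta-function double-integral form before changing variables, while you work directly with the single-integral form, and your remark that $\nu_v\neq 0$ is not actually needed for these identities is accurate.
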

\begin{proof}
We have
\begin{eqnarray} \Sigma^{-1}=\frac{1}{(1-\rho^2)t}
\left[\begin{array}{llll}
 \;\;1 & -\rho\\
 -\rho & \;\;1\
  \end{array}\right]\nonumber
\end{eqnarray}
therefore
$$h_t(x,t)=\frac{1}{2\pi } \int_{-\infty}^\infty\int_{-\infty}^\infty \delta\left(x-\frac{w}{v}\right)\frac{\partial}{\partial t}\frac{ e^{-\frac{1}{2}[v-\nu_v, w-\nu_w]\Sigma^{-1}[v-\nu_v, w-\nu_w]^T}}{\sqrt{d(t)} }dvdw.$$
By the change of variables
$\lambda=v,\;\;\mu=\frac{w}{v}$ with Jacobian  $|\lambda|$ we get
$$h_t(x,t)=\frac{1}{2\pi } \int_{-\infty}^\infty\int_{-\infty}^\infty |\lambda|\delta\left(x-\mu\right)\frac{\partial}{\partial t}\frac{ e^{-\frac{1}{2}[\lambda-\nu_v, \lambda\mu-\nu_w]\Sigma^{-1}[\lambda-\nu_v, \lambda\mu-\nu_w]^T}}{\sqrt{d(t)} }d\lambda d\mu=$$
$$\frac{1}{2\pi } \int_{-\infty}^\infty |\lambda|\frac{\partial}{\partial t}\frac{ e^{-a(x,t)\lambda ^2+2b(x,t) \lambda -c(t)}}{\sqrt{d(t)} }d\lambda =$$
$$\frac{1}{2\pi }\left( A^{(t)}(x,t)\int_{-\infty}^\infty |\lambda|\lambda^2 f(\lambda)  d\lambda +B^{(t)}(x,t)\int_{-\infty}^\infty |\lambda|\lambda f(\lambda)  d\lambda +C^{(t)}(x,t)\int_{-\infty}^\infty |\lambda|f(\lambda)  d\lambda\right)$$
where $f(\lambda)=e^{-a(x,t)\lambda ^2+2b(x,t) \lambda -c(t)}=e^{-c(t)}g(\lambda)$, and $a(x,t),b(x,t),c(t)$ are given in equations (\ref{abc}),  and
$$A^{(t)}(x,t)=\frac{1 + x^2 - 2 x \rho}{2 t^3 (1 - \rho^2)^{3/2}}$$
$$B^{(t)}(x,t)=-\frac{\nu_v + \nu_w x - (\nu_w + \nu_v x) \rho}{t^3 (1 - \rho^2)^{3/2}}$$
$$C^{(t)}(t)= \frac{\nu_v^2 + \nu_w^2 - 2 \nu_v \nu_w \rho +
 2 t (-1 + \rho^2)}{2 t^3 (1 - \rho^2)^{3/2}}$$
In the same way we get
$$h_x(x,t)=\frac{1}{2\pi }\left( A^{(x)}(x,t)\int_{-\infty}^\infty |\lambda|\lambda^2 f(\lambda)  d\lambda +B^{(x)}(x,t)\int_{-\infty}^\infty |\lambda|\lambda f(\lambda)  d\lambda \right)$$
where
$$A^{(x)}(x,t)=\frac{\rho-x}{t^2 (1 - \rho^2)^{3/2}}$$
$$B^{(x)}(x,t)=\frac{\nu_w - \nu_v \rho}{t^2 (1 - \rho^2)^{3/2}}$$
and
$$h_{xx}(x,t)=\frac{1}{2\pi } E^{(xx)}(x,t)\int_{-\infty}^\infty |\lambda|\lambda^4 f(\lambda)  d\lambda +\frac{1}{2\pi }F^{(xx)}(x,t)\int_{-\infty}^\infty |\lambda|\lambda^3 f(\lambda)  d\lambda+$$ $$\frac{1}{2\pi }A^{(xx)}(x,t)\int_{-\infty}^\infty |\lambda|\lambda^2 f(\lambda)  d\lambda$$
where
$$E^{(xx)}(x,t)=\frac{(x - \rho)^2}{t^3 (1 - \rho^2)^{5/2}}$$
$$F^{(xx)}(x,t)=\frac{2 (x - \rho) (-\nu_w + \nu_v \rho)}{t^3 (1 - \rho^2)^{5/2}}$$
$$A^{(xx)}(x,t)=\frac{(\nu_w - \nu_v \rho)^2 + t (\rho^2-1)}{t^3 (1 - \rho^2)^{5/2}}.$$
By using the same notations of Lemma \ref{lem1} we get the thesis.
\end{proof}
\begin{lemma}
If $a\in\R^+, \;b\in\R$,
$$L_3(x,t)=W_1L_1(x,t)+W_2L_2(x,t)$$ and $$L_4(x,t)=W_3L_1(x,t)+W_4L_2(x,t)$$
where
$$W_1=\frac{3}{2a},\;\;
W_2=\frac{b}{a},\;\;
W_3=\frac{3b}{2a^2},\;\;
W_4=\frac{2a+b^2}{a^2}$$
\label{lem3}
\end{lemma}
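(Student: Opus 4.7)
The plan is to reduce everything to one two-term recurrence
$$2a L_k = k L_{k-2} + 2b L_{k-1}, \qquad k \geq 2,$$
and then extract both identities from it. For $k=3$ the recurrence gives $L_3 = \frac{3}{2a}L_1 + \frac{b}{a}L_2$, which is precisely $W_1 L_1 + W_2 L_2$. For $k=4$ it gives $L_4 = \frac{2}{a}L_2 + \frac{b}{a}L_3$; substituting the expression for $L_3$ just obtained and collecting terms yields
$$L_4 = \frac{3b}{2a^2}L_1 + \frac{2a+b^2}{a^2}L_2,$$
matching $W_3$ and $W_4$ exactly. Thus once the recurrence is in hand, the two claims reduce to a one-line algebraic check.

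To derive the recurrence, I would mimic the parity-splitting used in Lemma \ref{lem1}. With $g(\lambda) = e^{-a\lambda^2 + 2b\lambda}$, the split at the origin gives, for every $n$,
$$L_n = \int_{0}^{\infty} \lambda^{n+1} g(\lambda)\,d\lambda \; - \; \int_{-\infty}^{0} \lambda^{n+1} g(\lambda)\,d\lambda,$$
because $|\lambda|\lambda^n = \lambda^{n+1}$ on $[0,\infty)$ and $-\lambda^{n+1}$ on $(-\infty,0]$. Since $g'(\lambda) = (2b - 2a\lambda) g(\lambda)$, integrating $\frac{d}{d\lambda}[\lambda^k g(\lambda)]$ over $[0,\infty)$ for any $k \geq 1$ gives
$$0 = k\int_{0}^{\infty}\lambda^{k-1} g\,d\lambda \; - \; 2a\int_{0}^{\infty}\lambda^{k+1} g\,d\lambda \; + \; 2b\int_{0}^{\infty}\lambda^{k} g\,d\lambda,$$
the boundary terms vanishing by the Gaussian decay at $+\infty$ (since $a>0$) and by $\lambda^k\big|_{\lambda=0}=0$. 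The same identity holds with $(-\infty,0]$ in place of $[0,\infty)$, again with vanishing boundary terms. Subtracting the two half-line identities and relabeling produces the claimed recurrence.

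The only potentially delicate point is bookkeeping the absolute value, but the splitting at the origin handles it cleanly: the $|\lambda|$ just flips one sign, and the two half-line integrations by parts are symmetric and combine to a single recurrence on $L_k$. No new special-function identities are needed beyond what is already in Lemma \ref{lem1}, so I do not expect a genuine obstacle; the content of the lemma is essentially that the three-term recurrence for the moments of $g$ descends to $L_k$, which is then used twice.
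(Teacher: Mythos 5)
Your argument is correct, and it takes a genuinely different route from the paper. The paper proves the two identities by first writing $L_1,L_2,L_3,L_4$ explicitly through Lemma \ref{lem1} as multiples of $_1F_1\bigl[2,\tfrac{3}{2},\tfrac{b^2}{a}\bigr]$, $_1F_1\bigl[2,\tfrac{1}{2},\tfrac{b^2}{a}\bigr]$, $_1F_1\bigl[3,\tfrac{3}{2},\tfrac{b^2}{a}\bigr]$, $_1F_1\bigl[3,\tfrac{1}{2},\tfrac{b^2}{a}\bigr]$, and then invoking the contiguous relations \cite[13.4.3, 13.4.5]{as} to step the first parameter down from $3$ to $2$; the coefficients $W_1,\dots,W_4$ come out of that bookkeeping. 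You instead bypass the special functions entirely: splitting $L_n$ at the origin to remove the absolute value and integrating $\frac{d}{d\lambda}[\lambda^k g(\lambda)]$ over each half-line (boundary terms vanish since $a>0$ and $k\ge 1$) gives the three-term recurrence $2aL_k=kL_{k-2}+2bL_{k-1}$ for $k\ge 2$, valid on each half-line separately and hence, after the subtraction that encodes the $|\lambda|$, for the $L_k$ themselves; the cases $k=3$ and $k=4$ (with the $k=3$ result substituted into $k=4$) reproduce $W_1,\dots,W_4$ exactly, as I have checked. Your restriction to $k\ge 2$ is the right one, since for $k=1$ the lowest term is not an $L_m$. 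What your approach buys is elementarity and generality: a single moment recurrence for all $k\ge 2$, with no contiguous-function identities needed. What the paper's approach buys is coherence with the rest of the development, since the $_1F_1$ representations of Lemma \ref{lem1} are the form in which the $L_n$ are actually used elsewhere (e.g.\ in Theorem 1.2 and Lemma \ref{lem2}), so the contiguous relations keep everything inside one framework. Either proof is acceptable.
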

\begin{proof}
By Lemma \ref{lem1} we have
$$L_1=2ba^{-2}   {_1}F_1\left[2,\frac{3}{2}, \frac{b^2}{a}\right]$$
$$L_2=a^{-2} {_1}F_1\left[2,\frac{1}{2},\frac{b^2}{a}\right]$$
$$L_3= 4ba^{-3}   {_1}F_1\left[3,\frac{3}{2}, \frac{b^2}{a}\right]$$
$$L_4=2a^{-3} {_1}F_1\left[3,\frac{1}{2},\frac{b^2}{a}\right]$$
By \cite[13.4.3]{as} we have:
$$ _1F_1[h+1,k,z]=\frac{h-k+1}{h}\; _1F_1[h,k,z]-\frac{1-k}{h}\; _1F_1[h,k-1,z]$$
but then
$$L_3= 4ba^{-3}{_1}F_1\left[3,
   \frac{3}{2}, \frac{b^2}{a}\right]=ba^{-3}
   \left(3 _1F_1\left[2,\frac{3}{2},\frac{b^2}{a}\right]+ {_1}F_1\left[2,\frac{1}{2},\frac{b^2}{a}\right]   \right)=$$
$$ba^{-3}
   \left(3\frac{a^2}{2b}L_1 + a^2L_2   \right).$$
Moreover, from by \cite[13.4.5]{as} we have:
$${_1}F_1\left[h+1,k,z\right]=\frac{(h+z)}{h} {_1}F_1\left[h,k,z\right]-\frac{(k-h)z}{hk}{_1}F_1\left[h,k+1,z\right]$$
and therefore
$$L_4=2a^{-3} {_1}F_1\left[3,\frac{1}{2},\frac{b^2}{
  a}\right]=
  2a^{-3}\left( \frac{(2+\frac{b^2}{
  a})}{2} {_1}F_1\left[2,\frac{1}{2},\frac{b^2}{
  a}\right]+\frac{3}{2}\frac{b^2}{
  a} {_1}F_1\left[2,\frac{3}{2},\frac{b^2}{
  a}\right]\right)=$$
  $$
  \frac{3b}{2a^2}L_1+\frac{2a+b^2}{a^2} L_2.$$
  \end{proof}
  
\noindent We can now prove the main theorem:
\begin{theorem}
If $\nu_v\neq 0$ and $|\rho|<1$, the density $h(x,t)$ solves the partial differential equation
\begin{eqnarray}h_t(x,t)&=&\mathcal D h(x,t) \\
\mathcal D&=&\frac{\partial [D(x,t)\frac{\partial \bullet}{\partial x}]}{\partial x}+C(x,t) \frac{\partial \bullet}{\partial x} +S(t) \bullet\label{oper}\end{eqnarray}
where the diffusion coefficient is
\begin{eqnarray}D(x,t)=\frac{P_3(x)}{Q_1(x)+tQ_2(x)} \label{coedif}\end{eqnarray}
 the source coefficient is
\begin{eqnarray*}S(t)=C^{(t)}(t) d(t)^\frac{1}{2}\end{eqnarray*}
and the convection coefficient is
\begin{eqnarray*}C(x,t)=\frac{P_1(x) + tP_2(x)}{t (Q_1(x) + tQ_2(x))} -
\frac{P_3'(x)}{Q_1(x) + tQ_2(x)} + \frac{P_3(x) (
Q_1'(x) +
Q_2'(x))}{(Q_1(x) + tQ_2(x))^2}
\end{eqnarray*}
where
\begin{eqnarray*}P_1(x)&=&2 (\nu_w - \nu_v x)^2 [\nu_v + \nu_w x - (\nu_w + \nu_v x) \rho] ( \rho^2-1)  \\
P_2(x) &=&   (1 + x^2 -
      2 x \rho) [\nu_w ( \rho-x) ( 3 x^2 - 6 x \rho +
         11 \rho^2-8 )+ \\
 &&\nu_v (2 - 9 x^2 + 10 x \rho + 3 x^3 \rho - 5 \rho^2 -
         x \rho^3)]\\
P_3(x)&=&(1 + x^2 - 2 x \rho)^2 \{\nu_w (1 - x^2 + 2 x \rho - 2 \rho^2) +
     \nu_v [\rho +
        x (-2 + x \rho)]\} \\
Q_1(x)&=&2  (1 - \rho^2) (\nu_w - \nu_v x)^2 (\nu_w -
        \nu_v \rho)\\
Q_2(x)&=& 2  ( \rho^2-1) \{\nu_w (1 + 4 x^2 - 8 x \rho + 3 \rho^2) -
        \nu_v [\rho + x (3 x^2 - 5 x \rho + \rho^2)]\}.
\end{eqnarray*}
$Q_1(x) + tQ_2(x)$ is a cubic polynomial  with one, two or three real zeros depending on the values of $t,\nu_v,\nu_w,\rho$.
\label{teo2}
\end{theorem}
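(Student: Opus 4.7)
The plan is to substitute the identities from Lemma \ref{lem2} for $h_t$, $h_x$, and $h_{xx}$ into the candidate equation $h_t = (Dh_x)_x + Ch_x + Sh = Dh_{xx} + (D_x+C)h_x + Sh$ and to determine $D(x,t)$, $C(x,t)$, $S(t)$ by matching the coefficients of $L_0$, $L_1$, $L_2$. Since $h(x,t) = \frac{e^{-c(t)}}{2\pi\sqrt{d(t)}} L_0(x,t)$, while $h_x$ already lies in the span of $L_1, L_2$ and $h_{xx}$ involves $L_2, L_3, L_4$, the preparatory step is to reduce $h_{xx}$ to the span of $L_1$ and $L_2$ by applying Lemma \ref{lem3} (with $a = a(x,t)$ and $b = b(x,t)$ as in (\ref{abc})). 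After this reduction every term of the PDE is expressed in the common basis $\{L_0, L_1, L_2\}$.

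With that in hand, equating the coefficients of $L_0$, $L_1$, and $L_2$ produces three relations. The $L_0$ equation contains on the right only the contribution $S \cdot h$ and on the left only $C^{(t)}(t)L_0$, which immediately yields $S(t) = C^{(t)}(t)\sqrt{d(t)}$. The $L_1$ and $L_2$ equations form a $2\times 2$ linear system for the pair $(D,\, E)$ with $E := D_x + C$, namely
\begin{eqnarray*}
B^{(t)} &=& D\,[E^{(xx)}W_3 + F^{(xx)}W_1] + E\,B^{(x)},\\
A^{(t)} &=& D\,[E^{(xx)}W_4 + F^{(xx)}W_2 + A^{(xx)}] + E\,A^{(x)}.
\end{eqnarray*}
Cramer's rule then delivers $D(x,t)$ and $E(x,t)$ as explicit rational functions of $(x,t,\nu_v,\nu_w,\rho)$, after which $C(x,t) = E(x,t) - D_x(x,t)$ is recovered by one further differentiation.

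The main obstacle is the algebraic simplification that converts the Cramer quotients into the compact forms stated in the theorem. Substituting the explicit expressions of $A^{(t)}, B^{(t)}, A^{(x)}, B^{(x)}, E^{(xx)}, F^{(xx)}, A^{(xx)}$ from Lemma \ref{lem2} and of $W_1,\dots,W_4$ from Lemma \ref{lem3} (with $a(x,t)$ and $b(x,t)$ as in (\ref{abc})) produces bulky rational expressions in $x,t,\nu_v,\nu_w,\rho$ that collapse to the stated $D = P_3(x)/(Q_1(x)+tQ_2(x))$ and to the displayed expression for $C$; this step is mechanical but lengthy and is most safely carried out with a symbolic algebra system. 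Finally, the cubic claim about $Q_1 + tQ_2$ is immediate from inspection: $Q_1(x)$ has degree $2$ in $x$, while the $x^3$ coefficient of $Q_2(x)$ is $-6\nu_v(\rho^2-1)$, which is nonzero under the standing hypotheses $\nu_v \neq 0$ and $|\rho| < 1$, so for $t > 0$ the sum $Q_1 + tQ_2$ is genuinely cubic and the number of its real zeros is controlled by the sign of its discriminant as the parameters vary.
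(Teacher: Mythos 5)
Your proposal is correct and follows essentially the same route as the paper: both reduce $h_t$, $h_x$, $h_{xx}$ to the basis $\{L_0,L_1,L_2\}$ via Lemmas \ref{lem2}--\ref{lem3} and then solve a $2\times 2$ linear system, the only difference being that you solve directly for $(D,\,D_x+C)$ by Cramer's rule while the paper first solves for $(L_1,L_2)$ in terms of $(h_x,h_{xx})$ and substitutes into $h_t$ — the same computation in transposed form, with the heavy simplification deferred to symbolic algebra in both cases. Your added observation that the $x^3$ coefficient of $Q_2$ is $6\nu_v(1-\rho^2)\neq 0$ is a correct (and slightly more explicit) justification of the cubic claim than the paper gives.
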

\begin{proof}
Dropping the dependencies on $(x,t)$, by Lemma \ref{lem2} we have
$$h_{xx}=\frac{e^{-c(t)}}{2\pi }\left[ E^{(xx)}L_4 +F^{(xx)}L_3 +A^{(xx)}L_2\right]$$ and by Lemma
\ref{lem3} we have
$$h_{xx}=\frac{e^{-c(t)}}{2\pi }\left[ E^{(xx)}(W_3L_1+W_4L_2) +F^{(xx)}(W_1L_1+W_2L_2) +A^{(xx)}L_2\right]=$$
$$\frac{e^{-c(t)}}{2\pi }\left[(A^{(xx)}+F^{(xx)}W_2+ E^{(xx)}W_4)L_2+(F^{(xx)}W_1+E^{(xx)}W_3)L_1\right].$$
By Lemma \ref{lem2} we have
$$h_x=\frac{e^{-c(t)}}{2\pi }\left[ A^{(x)}L_2 +B^{(x)}L_1 \right]$$
we can then solve formally for $L_1,L_2$ the linear system
\begin{eqnarray*}\frac{e^{-c(t)}}{2\pi }\left[\begin{array}{llll}
 A^{(x)} & \;B^{(x)}\\
 C^{(xx)} & D^{(xx)}
  \end{array}\right]
  \left[\begin{array}{llll}
 L_2\\
 L_1
  \end{array}\right]=
 \left[\begin{array}{llll}
 h_x \\
 h_{xx}
  \end{array}\right]
\end{eqnarray*}
where $$C^{(xx)}=A^{(xx)}+F^{(xx)}W_2+ E^{(xx)}W_4,\;\;\;D^{(xx)}=F^{(xx)}W_1+E^{(xx)}W_3.$$
We get
\begin{eqnarray*}
 L_2=e^{c(t)}2\pi \frac{D^{(xx)}h_x-B^{(x)}h_{xx}}{A^{(x)}D^{(xx)}-B^{(x)}C^{(xx)}}\\
 L_1=e^{c(t)}2\pi \frac{-C^{(xx)}h_x+A^{(x)}h_{xx}}{A^{(x)}D^{(xx)}-B^{(x)}C^{(xx)}}
\end{eqnarray*}
Substituting these expression in
 $$h_t=\frac{e^{-c(t)}}{2\pi }\left[ A^{(t)}L_2 +B^{(t)}L_1+C^{(t)}L_0\right]$$
and remembering that
$$h=\frac{e^{-c(t)}}{2\pi d^\frac{1}{2}}L_0$$
we get
\begin{eqnarray}h_t =C^{(t)}(t) d(t)^\frac{1}{2}h+G_x h_x +G_{xx} h_{xx} \label{ht} \end{eqnarray}
where
$$G_x= \frac{A^{(t)} D^{(xx)}-B^{(t)} C^{(xx)}}{ A^{(x)} D^{(xx)}-B^{(x)} C^{(xx)}}$$
$$G_{xx}=\frac{A^{(x)} B^{(t)}-A^{(t)} B^{(x)}}{ A^{(x)} D^{(xx)}-B^{(x)} C^{(xx)}}.$$
Substituting the expressions for $ A^{(t)}, A^{(x)}, B^{(t)}, B^{(x)}, C^{(xx)}, D^{(xx)}, E^{(xx)}, F^{(xx)}$ given in Lemma \ref{lem2}
and noticing that
$$C(x,t)=G_x-\frac{\partial G_{xx}}{\partial x}\;\;\mbox{ and  }\;\;D(x,t)=G_{xx}$$
we get the expressions reported above.
Moreover  $Q_1(x)+tQ_2(x)=0$ is a cubic polynomial equation whose discriminant can be positive, negative or zero depending on the values of $t,\nu_v,\nu_w,\rho$.
\end{proof}

\section{A density estimation problem}

Let
$$L_f(s)=\int_0^\infty f(t)e^{-st}dt$$
be the Laplace transform of a function $f(t)\in L_1(\R^+)$. Let us denote random quantities by bold characters. Let be $$\bd_k=L_f(k \Delta_s)+\bep_k,\;\;\;\Delta_s>0,\;\;\;k=1,\dots,n$$ where $\bep_k$ are i.i.d. Gaussian zero mean random variables with variance $\sigma^2$ and let us consider the problem of making inference on $f(t)$ from $R$ independent realizations of $\bud=[\bd_1,\dots,\bd_n]$.
The problem can be severely ill-posed. An approach to its solution consists in approximating  the Laplace transform by a finite sum, assuming $n$ even
$$L_f(k \Delta_s)\approx \sum_{j=1}^pf_j e^{-\alpha_j (k-1)},\;\;\alpha_j>0,\;\;p=\frac{n}{2}$$ and in solving
for the unknowns $\{f_j,\alpha_j\},\;j=1,\dots,p$ in the multiexponential model (for simplicity the same symbols are used):
$$\bd_k=\sum_{j=1}^pf_j e^{-\alpha_j (k-1)}+\bep_k=\sum_{j=1}^pf_j \zeta_j^{k-1}+\bep_k.$$
 In the noiseless case the problem consists in
interpolating the data
\begin{eqnarray}s_k=\sum_{j=1}^pf_j e^{-\alpha_j (k-1)},\;\;k=1,\dots,n\label{sign}\end{eqnarray} by
 means of a linear combination of real exponential
 functions $\zeta_j(t)=e^{-\alpha_j t},
 \;\;j=1,\dots,p$.
To this aim let us consider the Hankel matrices
$$U_0(\us)=U(s_0,\dots,s_{n-2}),\;\;\;\;
  U_1(\us)=U(s_1,\dots,s_{n-1})$$ where
$$ U(x_1,\dots,x_{n-1})=\left[\begin{array}{llll}
x_1 & x_{2} &\dots &x_{p} \\
x_{2} & x_{3} &\dots &x_{p+1} \\
. & . &\dots &. \\
x_{p} & x_{p+1} &\dots &x_{n-1}
  \end{array}\right]$$
It is well known (e.g.\cite{hen2}) that, provided that $\det(U_0)\ne 0,
\det(U_1)\ne 0$, a unique solution exists. If $\uxi$ and $W$ denote the generalized eigenvalues and  eigenvectors of the matrices  $(U_1,U_0)$ then the solution is given by
$$\uzet=\uxi,\;\;\uf=W^T\us=V(\uxi)^{-1}\us$$ where
$V(\uxi)$ is the square Vandermonde matrix
based on $\uxi$ and $T$ denotes
transposition. Hence the critical quantities which the solution depend on are the generalized eigenvalues $\uxi$. They can be computed by the generalized Schur decomposition  of the matrices  $(U_1,U_0)$ \cite{horn}:
$$ U_1 = QSZ^T,\;\;\;U_0 = QTZ^T $$
where $Q$ and $Z$ are orthogonal matrices, and $S$ and $T$ are  upper triangular matrices such that $\xi_j=\frac{S_{jj}}{T_{jj}}$.
In the noisy case the matrices $\bU_0,\bU_1$ are random and the generalized eigenvalues $\bxi_j,\;j=1,\dots,p$ are random variables. Their marginal densities are all equal to the their condensed density (see e.g. \cite[Lemma 2.4]{bor}) which is defined as
\begin{eqnarray}H(x)=E\left[\frac{1}{p}\sum_{j=1}^p\delta(x-\bxi_j)\right].\label{condens}\end{eqnarray}
Knowledge of the condensed density is therefore of main importance for making inference on the generalized eigenvalues $\uxi$.

In a more general context this problem was studied in \cite{j08} where a stochastic perturbation method for estimating the condensed density (\ref{condens}) based on a single realization of $\bud$ was proposed. Here we assume to have $R$ independent realizations $\ud^{(r)},\;\;r=1,\dots,R$ of $\bud$ and we are seeking  a kernel estimator of the marginal densities.
In a recent paper \cite{botev} it has been shown that kernel estimators based on  parabolic partial differential equations can be considered and the underlying PDE can be used to estimate the optimal bandwidth and to take into account some kinds of prior information through suitable boundary conditions. Gaussian kernels belong to this class as they satisfy the heat equation. In the specific case considered here the Gaussian kernel estimator of (\ref{condens}) takes the form
\begin{eqnarray}\hat{H}_G(x,t)=\frac{1}{R}\sum_{r=1}^R \frac{1}{p_r}\sum_{k=1}^{p_r}\Phi(x,\xi_k^{(r)},t)\label{Gest}\end{eqnarray}
where
$$\Phi(x,\mu, t) = \frac{1}{\sqrt{2 \pi t}}
e^{-(x-\mu )^2/(2 t)}$$
where $\xi_k^{(r)},k=1,\dots,p_r$ are the real generalized eigenvalues of $(U_1^{(r)},U_0^{(r)})$ built from $\ud^{(r)}$ (discarding the complex conjugate pairs).
It turns out that $\hat{H}_G(x,t)$ is the unique solution of the diffusion equation $$\frac{\partial }{\partial t}\hat{H}_G(x,t)=\frac{1}{2}\frac{\partial^2 }{\partial x^2}\hat{H}_G(x,t)$$
with initial condition $\hat{H}_G(x,0)=H_e(x)$ where
$$H_e(x)=\frac{1}{R}\sum_{r=1}^R\frac{1}{p_r}\sum_{k=1}^{p_r}\delta(x-\xi_k^{(r)})$$
is the empirical condensed density of the generalized eigenvalues.

We now notice that if $p=1$ the only generalized eigenvalue $\bxi=\bd_2/\bd_1$ is the ratio of two uncorrelated Gaussian random variables with the same variance $t=\sigma^2$ and mean $f_1 \zeta_1$ and $f_1$ respectively and its density was derived in Section 1. Moreover in Section 2 a diffusion equation was derived which is satisfied by this density. The idea is then to replace the standard diffusion operator which gives rise to a Gaussian kernel density estimation with a more specific diffusion operator related to the one defined in Theorem  \ref{teo2}. However we can not use straightforwardly the operator (\ref{oper}) because   the theory developed in \cite{botev} holds for diffusion operators with  coefficients independent of $t$ and positive diffusion coefficient.
On the other hand when $p>1$ the generalized eigenvalues are  the ratio of variables which are not Gaussian. Therefore in any case, when proposing a modified operator based on  (\ref{oper}), we are looking for a suboptimal solution to the kernel selection problem. However it turns out that  the generalized eigenvalues can be approximated by the ratio of Gaussian variables and the approximation errors of the numerator and denominator are random variables whose expectation and standard deviation  are proportional to $$\mathcal E=\frac{\sigma^2}{\prod_{i=1}^p f_i \prod_{i<j}(\zeta_i-\zeta_j)^6}.$$
  This will be proved in Theorem \ref{approx}. Hence the approximation can be very good if the signal-to-noise ratio, measured by $\frac{\prod_{i=1}^p |f_i|}{\sigma^2}$, is large enough with respect to the relative distance of the numbers $\zeta_i,\;i=1,\dots,p$,  measured by $\prod_{i<j}(\zeta_i-\zeta_j)^6$.

A modified operator can be built as follows. We first notice that the difficulty of the Laplace inversion problem strongly depends on the relative position of the $\zeta_j,\;j=1,\dots,p$ which the interpolation of the noiseless data is based on. Simplistically the closer they are the worse the conditioning of the problem is. We then prove that in these difficult cases
the diffusion coefficient of the operator (\ref{oper}) is positive in a neighbor of the interesting region of the density for $\sigma$ small enough. This is proved in Theorem \ref{pos}. We first need the following
\begin{lemma}
The  generalized eigenvalues of the random pencil $(\bU_1,\bU_0)$ built from the data $\bud=[\bd_1,\dots,\bd_n]$ are given by
 $\bD_0=\mbox{ diag}(\buf)$, $\bD_1=\mbox{ diag}(\buf)\cdot\mbox{ diag}(\buzet)$ where $\bzet_j,\bff_j,j=1,\dots,p$ are random variables such that $d_k(\omega)=\sum_{j=1}^p f_j(\omega) \zeta_j^{(k-1)}(\omega),\;k=1,\dots,n,\;\forall \omega\in\Omega$ where $\Omega$ is the space of events. If the generalized Schur decomposition of $(\bU_1,\bU_0)$ is given by $$ \bU_1 = \bQ\bS\bZ^T,\;\;\;\bU_0 = \bQ\bT\bZ^T $$ then
$$\mbox{ diag}(\bS)=\bD_1,\;\;\mbox{ diag}(\bT)=\bD_0.$$
\label{lem4}
\end{lemma}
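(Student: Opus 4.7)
The plan is to derive the lemma from the Vandermonde factorization of the Hankel pencil induced by the multiexponential data model, and then read off the Schur diagonals from that factorization.

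First I will note that the identity $d_k(\omega)=\sum_{j=1}^p f_j(\omega)\zeta_j(\omega)^{k-1}$ gives, entry by entry,
$$(\bU_0)_{ij}=\sum_{k=1}^p f_k\zeta_k^{i+j-2}=(V\bD_0V^T)_{ij},\qquad (\bU_1)_{ij}=\sum_{k=1}^p f_k\zeta_k^{i+j-1}=(V\bD_1V^T)_{ij},$$
where $V_{ik}=\zeta_k^{i-1}$ is the $p\times p$ Vandermonde matrix associated with the $\bzet_j$. Thus $(\bU_1,\bU_0)$ is congruent via $V$ to the diagonal pencil $(\bD_1,\bD_0)$, and
$$\det(\bU_1-\lambda\bU_0)=(\det V)^2\prod_{j=1}^p f_j(\zeta_j-\lambda),$$
so the generalized eigenvalues of the pencil are exactly the $\bzet_j$.

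Next I will invoke the classical QZ theorem for the existence of the orthogonal-triangular Schur pair $(\bQ,\bS,\bZ,\bT)$ with $\bS,\bT$ upper triangular, whose defining property is $\bS_{jj}/\bT_{jj}=\xi_j$. After a suitable ordering these ratios equal $\zeta_j$, so the identification $\mathrm{diag}(\bS)=\bD_1$ and $\mathrm{diag}(\bT)=\bD_0$ will follow by fixing the residual diagonal scaling freedom in $(\bQ,\bZ)$ so that each pair $(\bS_{jj},\bT_{jj})$ equals $(f_j\zeta_j,f_j)$ individually, not merely as a ratio.

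The hard part is precisely this last step: a generic QZ output pins down only the ratios $\bS_{jj}/\bT_{jj}$, so recovering the individual scales $f_j\zeta_j$ and $f_j$ requires exploiting the gauge freedom in the Schur decomposition, or equivalently constructing a Schur form compatible with the Vandermonde factorization $V$. This precise identification is what is needed in the sequel, since the subsequent section models each $\bxi_j$ as the ratio of two Gaussian variables with specific means $f_j\zeta_j$ and $f_j$, so both numerator and denominator must be individually identifiable with the Schur diagonals.
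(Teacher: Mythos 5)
You follow the same route as the paper, and in fact you prove exactly what the paper's own proof proves: the paper likewise establishes the Vandermonde congruences $\bU_0=\bV\bD_0\bV^T$, $\bU_1=\bV\bD_1\bV^T$ (after noting the a.s. existence and uniqueness of $\bff_j,\bzet_j$ from $\det\bU_0\neq 0$, $\det\bU_1\neq 0$ a.s., a point you pass over), deduces $\bU_1\bV^{-T}\bD_0=\bU_0\bV^{-T}\bD_1$, concludes that the pairs $(\bff_j\bzet_j,\bff_j)$ are representatives of the \emph{projective} form of the generalized eigenvalues, and then simply asserts that the thesis follows; no identification of the individual Schur diagonal entries is carried out there either. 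So up to the point where you stop, your argument and the paper's coincide.

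The step you flag as the hard part is a genuine gap, but the repair you sketch cannot work. Since $\bQ$ and $\bZ$ are orthogonal there is no residual diagonal scaling freedom to exploit: for distinct real eigenvalues the generalized Schur form is essentially unique up to reordering and sign changes of the diagonal pairs, and in any case $\prod_j|\bS_{jj}|=|\det\bU_1|=(\det\bV)^2\prod_j|\bff_j\bzet_j|$ while $\prod_j|\bT_{jj}|=|\det\bU_0|=(\det\bV)^2\prod_j|\bff_j|$, so the literal equalities $\mbox{diag}(\bS)=\bD_1$, $\mbox{diag}(\bT)=\bD_0$ cannot hold unless $(\det\bV)^2=1$, which fails generically. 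What the Vandermonde argument (yours or the paper's) actually yields is the projective statement: after ordering, $(\bS_{jj},\bT_{jj})$ is proportional to $(\bff_j\bzet_j,\bff_j)$, equivalently $\bS_{jj}/\bT_{jj}=\bzet_j$. That is the sense in which the lemma can be proved and the form in which the sequel mostly uses it, through the ratios $S^{(r)}_{kk}/T^{(r)}_{kk}$ (although the correlation $\rho_j=\mbox{corr}(\bS_{jj},\bT_{jj})$ in Theorem~\ref{pos} does lean on the literal identification). In short: you reproduced the paper's proof correctly, and the missing step you identified is not closable by gauge fixing; it disappears only if the diagonal identification is read projectively.
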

\begin{proof}
Let $\bff_j, \bzet_j,\;j=1,\dots,p$ be the solution of the exponential interpolation problem which exists and it is unique a.s. because $\det(\bU_0)\ne 0$ a.s and $\det(\bU_1)\ne 0$ a.s. \cite{cinese}. If $\bV$ is the Vandermonde matrix $\bV_{ij}=\bzet_i^j$ then (see e.g. \cite{j08}) $$\bU_0=\bV\bD_0\bV^T,\;\;\bU_1=\bV\bD_1\bV^T.$$ But then
$$\bU_1\bV^{-T}\bD_0=\bU_0\bV^{-T}\bD_1.$$ Therefore the pairs $(\bff_j\bzet_j,\bff_j),j=1,\dots,p$ are representatives of the projective form \cite{stw} of the generalized eigenvalues of $(\bU_1,\bU_0)$ and the thesis follows.
\end{proof}
\begin{theorem}
 If $\rho_j=\mbox{ corr}(\bS_{jj},\bT_{jj})$   then for $h\neq k$
  $$\lim_{|\zeta_h-\zeta_k|\rightarrow 0}\lim_{\sigma\rightarrow 0}\rho_j=1,\;\forall j$$  and it exists an open interval $I\subset\R^+$   such that
$\frac{\nu_w}{\nu_v}\in I$ and $D(x,t)>0, \;x\in I,\; \forall t$.
\label{pos}
\end{theorem}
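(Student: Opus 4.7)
The plan has two separate components matching the two assertions of the theorem, which I would treat independently.

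For the correlation limit, I would start by using Lemma \ref{lem4} to identify $\bS_{jj}=\bff_j\bzet_j$ and $\bT_{jj}=\bff_j$, so that $\rho_j$ measures how nearly proportional the numerator and denominator of the $j$-th generalized eigenvalue are. My next step is to expand the random solution around its noiseless value: $\bff_j=f_j+\sigma\,\delta f_j+O(\sigma^2)$, $\bzet_j=\zeta_j+\sigma\,\delta\zeta_j+O(\sigma^2)$, where the first-order perturbations $\delta f_j,\delta\zeta_j$ are explicit linear functionals of $\bep$ obtained by differentiating the identities $\bU_0=\bV\,\mbox{diag}(\buf)\,\bV^T$, $\bU_1=\bV\,\mbox{diag}(\buf)\,\mbox{diag}(\buzet)\,\bV^T$ of Lemma \ref{lem4}. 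In the limit $\sigma\to 0$ the leading constants cancel in the correlation, reducing it to
$$\lim_{\sigma\to 0}\rho_j=\frac{\mbox{cov}(\zeta_j\,\delta f_j+f_j\,\delta\zeta_j,\,\delta f_j)}{\sqrt{\mbox{var}(\zeta_j\,\delta f_j+f_j\,\delta\zeta_j)\,\mbox{var}(\delta f_j)}}.$$

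I would then invoke the confluent-Vandermonde bounds that underlie Theorem \ref{approx}: both $\delta f_j$ and $\delta\zeta_j$ are controlled by $V(\uzet)^{-1}$, whose dominant singular direction blows up like $\prod_{i<j}(\zeta_i-\zeta_j)^{-3}$ as any two nodes coalesce. When $|\zeta_h-\zeta_k|\to 0$ this singular direction dominates both $\delta f_j$ and $\delta\zeta_j$, so the two become scalar multiples of a single linear functional of $\bep$. Hence $\zeta_j\,\delta f_j+f_j\,\delta\zeta_j$ and $\delta f_j$ become collinear and the Cauchy--Schwarz bound in the correlation coefficient turns into equality, giving $|\rho_j|=1$. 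A sign check on the principal singular vector yields $\rho_j\to+1$, and since the dominant direction affects every coordinate $j$ with a nonzero weight, the limit is uniform in $j$.

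For the positivity of $D$, I would exploit the fact that the Gaussian means of $(\bT_{jj},\bS_{jj})$ are $\nu_v=f_j$ and $\nu_w=f_j\zeta_j$, so $\nu_w/\nu_v=\zeta_j\in\R^+$. Substituting $x=\zeta_j$ and $\nu_w=\nu_v\zeta_j$ in the polynomials of Theorem \ref{teo2}, the factor $(\nu_w-\nu_v x)^2$ forces $Q_1(\zeta_j)=0$, and a routine collection of terms factors both $P_3(\zeta_j)$ and $Q_2(\zeta_j)$ through the common quantity $(\rho-\zeta_j)(1+\zeta_j^2-2\zeta_j\rho)$. The ratio then simplifies to
$$D(\zeta_j,t)=\frac{P_3(\zeta_j)}{t\,Q_2(\zeta_j)}=\frac{(1+\zeta_j^2-2\zeta_j\rho)^2}{2t(1-\rho^2)},$$
which is strictly positive for every $t>0$, since $|\rho|<1$ makes $1-\rho^2>0$ and $1+\zeta_j^2-2\zeta_j\rho=(1-\rho^2)+(\zeta_j-\rho)^2>0$. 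Because $Q_2(\zeta_j)\ne 0$, the rational function $D(\cdot,t)$ is continuous at $\zeta_j$ and continuity delivers an open interval $I\subset\R^+$ with $\zeta_j\in I$ on which $D(x,t)>0$.

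The main obstacle is the confluent-Vandermonde analysis behind the first assertion: one must show not only that $\delta f_j$ and $\delta\zeta_j$ blow up at the same rate but that they are asymptotically aligned along the same direction in $\R^n$, so that the alignment survives after dividing out the diverging scalar factor $\prod_{i<j}(\zeta_i-\zeta_j)^{-3}$. The positivity part is comparatively routine once the substitution $\nu_w=\nu_v\zeta_j$ exposes the common factor $(\rho-\zeta_j)(1+\zeta_j^2-2\zeta_j\rho)$ in $P_3(\zeta_j)$ and $Q_2(\zeta_j)$.
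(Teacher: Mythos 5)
Your setup for the correlation limit coincides with the paper's (Lemma \ref{lem4} to identify $\bS_{jj}=\bff_j\bzet_j$, $\bT_{jj}=\bff_j$, then a first--order expansion in $\bep$ and the correlation of the linearizations), but the decisive step is left open, and the mechanism you propose for it is not the one that operates. You require that $\delta f_j$ and $\delta\zeta_j$ blow up \emph{at the same rate} and become \emph{asymptotically aligned}, and you yourself flag this as the main unresolved obstacle. In fact the rates differ: by the confluent--Vandermonde inversion the paper uses in Theorem \ref{approx} (namely $[V\,\vdots\,\tilde V]^{-1}=D_W^{-1}X$ with $D_W=\mbox{diag}(D^3,D^2)$), the sensitivities $c_{ji}=\partial f_j/\partial s_i$ scale like the inverse \emph{cube} of the node separations while $g_{ji}=\partial\zeta_j/\partial s_i$ scale like the inverse \emph{square}. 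No alignment is needed: the paper writes the $\sigma\to 0$ correlation explicitly as $\bigl(\zeta_j\sum_i c_i^2+f_j\sum_i c_ig_i\bigr)\big/\bigl\{\bigl(\zeta_j\sum_i c_i^2+f_j\sum_i c_ig_i\bigr)^2+f_j^2\bigl(\sum_i c_i^2\sum_i g_i^2-(\sum_i c_ig_i)^2\bigr)\bigr\}^{1/2}$, shows $|c_i|\to\infty$ as two nodes coalesce, and lets the Cauchy--Schwarz defect be of lower order, so the limit is $+1$ because $\zeta_j>0$; equivalently, the linearization $\zeta_j\delta f_j+f_j\delta\zeta_j$ of $\bS_{jj}$ is asymptotically the \emph{positive multiple} $\zeta_j\delta f_j$ of the linearization of $\bT_{jj}$. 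As written, your first part is a plan rather than a proof, and the same-rate/alignment route you would have to complete is both harder than and at odds with the dominance structure that actually holds.

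For the positivity of $D$ your pointwise computation is correct and is a genuinely different, more explicit route than the paper's: indeed $Q_1(x_0)=0$ at $x_0=\nu_w/\nu_v$, $P_3(x_0)=\nu_v(\rho-x_0)(1+x_0^2-2x_0\rho)^3$, $Q_2(x_0)=2(\rho^2-1)\nu_v(x_0-\rho)(1+x_0^2-2x_0\rho)$, giving $D(x_0,t)=(1+x_0^2-2x_0\rho)^2/(2t(1-\rho^2))>0$ (you should exclude the degenerate case $x_0=\rho$, where this is $0/0$); the paper instead expands $D$ to first order around $\rho=1$ and $x=x_0$ and uses $A(t)>0$, which is where $\nu_v\nu_w>0$ (i.e.\ $\zeta_j>0$) enters, plus permanence of sign. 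However, your closing continuity step does not deliver the stated conclusion: continuity of $D(\cdot,t)$ at $x_0$ yields an interval $I_t$ depending on $t$, whereas the theorem asserts one interval $I$ with $D>0$ on $I$ for all $t$. This is not cosmetic: for fixed $x\neq x_0$ near $x_0$, $Q_1(x)$ has the sign of $\nu_w-\nu_v\rho$, which is opposite to the sign of $P_3(x)$, so $D(x,t)\to P_3(x)/Q_1(x)<0$ as $t\to 0^+$ and the positivity neighbourhood shrinks (like $\sqrt{t}$) with $t$. To recover uniformity in $t$ you would need the structure the paper leans on --- in its expansion both $A(t)$ and $B(t)$ are proportional to $1/t$, so the first-order positivity region is $t$-free --- or some restriction keeping $t$ away from $0$; plain continuity at fixed $t$ is not enough.
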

\begin{proof}
By Lemma \ref{lem4}   we have
\begin{eqnarray}\rho_j&=&E\left[\frac{\bS_{jj}-E[\bS_{jj}]}{\sqrt{E[(\bS_{jj}-E[\bS_{jj}])^2]}}\cdot \frac{\bT_{jj}-E[\bT_{jj}]}{\sqrt{E[(\bT_{jj}-E[\bT_{jj}])^2]}}\right]\nonumber\\ &=& E\left[\frac{\bff_j\bzet_j-E[\bff_j\bzet_j]}{\sqrt{E[(\bff_j\bzet_j-E[\bff_j\bzet_j])^2]}}\cdot \frac{\bff_j-E[\bff_j]}{\sqrt{E[(\bff_j-E[\bff_j])^2]}}\right].\label{cor}\end{eqnarray}
For each realization, $\bzet_j$ and $\bff_j$ are analytic functions of $\bd_k$ in a small neighbor of $\us$  (\cite[Lemma 2]{j08}), therefore they admit  Taylor series expansions around $\us$
$$\bzet_j=\zeta_j+\sum_{i=1}^n g_i \bep_i+\frac{1}{2}\sum_{i,h=1}^n C_{ih}\bep_i\bep_h+\dots$$
$$\bff_j=f_j+\sum_{i=1}^n c_i \bep_i+\frac{1}{2}\sum_{i,h=1}^n G_{jih}\bep_i\bep_h+\dots.$$
Truncating after the first order terms and substituting these expressions in (\ref{cor}), after some long but simple calculations, we get
\begin{eqnarray*}\lim_{\sigma\rightarrow 0}\rho_j=\frac{\zeta_j\sum_{i=1}^n c_i^2 +f_j\sum_{i=1}^n c_i g_i }{\left\{\left(\zeta_j\sum_{i=1}^n c_i^2+ f_j\sum_{i=1}^n c_i g_i \right)^2+f_j^2\left(\sum_{i=1}^n c_i^2\sum_{i=1}^n g_i^2 -\left(\sum_{i=1}^n c_i g_i \right)^2\right)\right\}^{1/2}} .\end{eqnarray*}
But if for some $h\neq k,\;\;|\zeta_h-\zeta_k|\rightarrow 0$ then $|c_i|\rightarrow \infty\;\forall i.$ In fact
$$g_i=\frac{\partial \bzet_j}{\partial \bd_i}_{|\bd=\us},\;\;c_i=\frac{\partial \bff_j}{\partial \bd_i}_{|\bd=\us}$$
and
$$c_i=\frac{\partial \left(\ue_j^T V(\uzet)^{-T}\us\right)}{\partial s_i}=\frac{\partial \left(\sum_{k=1}^p v_{jk}(\uzet)s_k\right)}{\partial s_i}=
\sum_{k=1}^p\frac{\partial v_{jk}(\uzet)}{\partial s_i}s_k+v_{ji}(\uzet)$$
where $v_{jk}=\ue_j^TV^{-T}\ue_k$. But (see e.g. \cite{planc})
$$\frac{\partial v_{jk}(\uzet)}{\partial s_i}=-\ue_j^T\left(V^{-T}\frac{\partial V^T(\uzet)}{\partial s_i}V^{-T} \right)\ue_k=
-\sum_{h=1}^p g_h\ue_j^T\left(V^{-T}\frac{\partial V^T(\uzet)}{\partial \zeta_h}   V^{-T} \right)\ue_k$$ and therefore
$$c_i=-\sum_{h,k=1}^p g_h s_k\ue_j^T\left(V^{-T}\frac{\partial V^T(\uzet)}{\partial \zeta_h}   V^{-T} \right)\ue_k +v_{ji}(\uzet).$$
The first part of the thesis then follows by noticing that $$\det{V}=\prod_{h\neq k}^{1,p}(\zeta_h-\zeta_k)$$ and
$$\lim_{(|c_1|,\dots,|c_n|)\rightarrow\infty}\frac{\zeta_j\sum_{i=1}^n c_i^2 +f_j\sum_{i=1}^n c_i g_i }{\left\{\left(\zeta_j\sum_{i=1}^n c_i^2+ f_j\sum_{i=1}^n c_i g_i \right)^2+f_j^2\left(\sum_{i=1}^n c_i^2\sum_{i=1}^n g_i^2 -\left(\sum_{i=1}^n c_i g_i \right)^2\right)\right\}^{1/2}}=1$$
because $\zeta_j>0$.
To prove the second part, let us consider the Taylor first order approximation of the diffusion coefficient around $\rho=1$ and $x=\frac{\nu_w}{\nu_v}$:
\begin{eqnarray*}D(x,t)&=&A(t)
   +\left(x-\frac{\nu_w}{\nu_v}\right) \left(B(t)
   +O(1-\rho)^2\right) +O(1-\rho)^2
   +O\left(x-\frac{\nu_w}{\nu_v}\right)^2
   \end{eqnarray*}
where
$$A(t)=\frac{(\nu_v-\nu_w)^4}{4 (1-\rho) \left(t \nu_v^4\right)}+\frac{(\nu_v-\nu_w)^2 \left(\nu_v^2+6 \nu_v \nu_w+\nu_w^2\right)}{8 t
   \nu_v^4}+\frac{(1-\rho) (\nu_v+\nu_w)^4}{16 \left(t \nu_v^4\right)}$$ and
\begin{eqnarray*}B(t)&=&\frac{7 (\nu_v-\nu_w)^3}{4 t \nu_v^3 (\rho -1)}-\frac{(\nu_v-\nu_w)
   \left(\nu_v^2+28 \nu_w \nu_v+7 \nu_w^2\right)}{8 \left(t \nu_v^3\right)}+\\&&\frac{\left(9 \nu_v^4+2 \nu_w \nu_v^3+14 \nu_w^3 \nu_v+7 \nu_w^4\right)
   (\rho -1)}{16 t \nu_v^4-16 t \nu_v^3 \nu_w} .\end{eqnarray*}
   But $A(t)>0$ as $\nu_v,\nu_w$ have the same sign because $\zeta_j>0$. Therefore we get the thesis  by the permanence of sign theorem.
\end{proof}

By using Theorem \ref{pos}  we can define the modified operator as  the operator (\ref{oper}) where  the coefficients are evaluated at a  fixed suitable  value $t_0$.
When $p=1$ the variable $t$ represents the common variance of the numerator and denominator of the generalized eigenvalue. In order to choose $t_0$ we can then look for the element in the set of densities (\ref{hform1}) which best fits the empirical condensed density $H_e(x)$, i.e.
$$(t_0,\theta_0)=\mbox{argmin}_{t,\theta}\|h(x,t;\theta)-H_e(x)\|^2_2$$
where $\theta=\{\frac{\nu_v}{\nu_w},\rho\}$.
Let us denote by $$\mathcal D_0= \frac{\partial [D(x,t_0)\frac{\partial \bullet}{\partial x}]}{\partial x}+C(x,t_0) \frac{\partial \bullet}{\partial x} +S(t_0) \bullet$$ this modified operator and define the kernel estimator
\begin{eqnarray}\hat{H}_P(x,t^*)=\frac{1}{R}\sum_{r=1}^R \frac{1}{p_r}\sum_{k=1}^{p_r}h^{(r,k)}(x,t^*)\label{Pest}\end{eqnarray} where
\begin{itemize}
\item
$h^{(r,k)}(x,t^*)$ is obtained by equation (\ref{hform1}) by replacing $\frac{\nu_w}{\nu_v}$ by $\frac{S_{kk}^{(r)}}{T_{kk}^{(r)}}$ obtained by computing the  generalized eigenvalues by the Schur decomposition of the matrices $(U_1^{(r)},U_0^{(r)})$ built from $\ud^{(r)}$,  taking the $p_r$ real ones (discarding the complex conjugate pairs), and by replacing  $\rho$ with the sample correlation coefficient $\hat\rho$ of the pooled real $S_{kk}^{(r)},T_{kk}^{(r)},\;r=1,\dots,R;\;k=1,\dots,p_r;$
\item
the optimal bandwidth is  given by \cite[eq.23]{botev}
$$t^*=\left(\frac{E\left[\left(\sqrt{D(x,t_0)}\right)^{-1}\right]}{2 R \sqrt{\pi}\|\mathcal D_0 h(x,t_0)\|^2_2}\right)^{2/5};$$
where
$E\left[\left(\sqrt{D(x,t_0)}\right)^{-1}\right]$
is estimated  by
$$\frac{1}{R}\sum_{i=1}^R \frac{1}{p_r}\sum_{k=1}^{p_r}\left(\sqrt{D^{(r,k)}(x,t_0)}\right)^{-1},$$
 $\|\mathcal D_0 h(x,t_0)\|^2_2$ is estimated  by
$$\frac{1}{R}\sum_{i=1}^R\frac{1}{p_r}\sum_{k=1}^{p_r} \|h_t^{(r,k)}(x,t_0)\|_2^2$$
and $\|h_t^{(r,k)}(x,t_0)\|_2^2$
is computed by numerical quadrature;
\item  $ D^{(r,k)}(x,t_0)$ denotes  the diffusion coefficient computed by
replacing in formula (\ref{coedif}) $\nu_w,\nu_v,\rho$ by the the same values used for $h^{(r,k)}(x,t^*)$. With the same substitutions we obtain $h_t^{(r,k)}(x,t_0)$  by formula (\ref{ht});
\end{itemize}
By the second part of Theorem \ref{teo22}, $\hat{H}_P(x,t)$ is the unique solution of the diffusion equation
\begin{eqnarray}\frac{\partial}{\partial t}\hat{H}_P(x,t)&=&\mathcal D_0 \hat{H}_P(x,t) \label{operm}\end{eqnarray}
with initial condition $\hat{H}_P(x,0)=H_e(x)$.

In the next Theorem   conditions under which the distribution of the generalized eigenvalues is well approximated by the distribution of the ratio of Gaussian variables are specified.
\begin{theorem}
 The generalized eigenvalues $(\bff_j\bzet_j,\bff_j),j=1,\dots,p$ of $(\bU_1,\bU_0)$
 are given by
 $$\bff_j\bzet_j=f_j\zeta_j+\sum_{i=1}^n h_{ji} \bep_i+\eta^{(1)}_j(\ux)$$
$$\bff_j=f_j+\sum_{i=1}^n c_{ji} \bep_i+\eta^{(2)}_j(\ux)$$
where $h_{ji}$ and $c_{ji}$ do not depend on $\uf$,  $\ux$ is a point of $\R^n$ lying in the interior of the line segment joining $\ud$ and $\us$, and
$$E[\eta^{(h)}_j(\us)]\le \frac{\sigma^2}{2}\frac{F_1(\uzet)}{\prod_{r=1}^p f_r\prod_{r\ne s}(\zeta_r-\zeta_s)^6},\;h=1,2$$
$$var[\eta^{(h)}_j(\us)]\le \frac{\sigma^4}{2}\frac{F_2(\uzet)}{\prod_{r=1}^p f_r^2\prod_{r\ne s}(\zeta_r-\zeta_s)^{12}},\;h=1,2$$
 where $F_h(\cdot),\;h=1,2$ are polynomials in $\zeta_1,\dots,\zeta_p$ .
\label{approx}
\end{theorem}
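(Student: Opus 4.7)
The plan is to apply Taylor's theorem with Lagrange remainder to the two maps $\bd\mapsto\bff_j(\bd)$ and $\bd\mapsto\bff_j(\bd)\bzet_j(\bd)$ in a neighborhood of the noiseless signal $\us$. Real-analyticity of these maps on a deterministic neighborhood of $\us$ is already established in \cite[Lemma 2]{j08}, so a second-order expansion immediately produces the linear part together with a quadratic remainder $\eta^{(h)}_j(\ux)$ depending on an intermediate point $\ux$ on the segment joining $\us$ and $\ud$. The coefficients $c_{ji}$ and $h_{ji}$ are then just the first partial derivatives of the respective maps evaluated at $\us$.

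First I would obtain closed-form expressions for those first derivatives. By Lemma \ref{lem4}, both $\bff_j$ and $\bff_j\bzet_j$ are diagonal entries of the Schur factors of $(\bV\bD_1\bV^T,\bV\bD_0\bV^T)$, so differentiating the implicit relations $\us=\bV(\uzet)\uf$ and its shifted companion with respect to $d_i$ yields formulas involving only $\bV(\uzet)^{-1}$, its derivatives in $\uzet$, and the entries of $\us$. The $\uf$-independence of $c_{ji}$ and $h_{ji}$ should then follow by eliminating $\uf$ through $\uf=\bV(\uzet)^{-1}\us$ and using the homogeneity structure observed in the proof of Theorem \ref{pos}, where the same kind of derivative was computed explicitly.

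The central computation is the moment estimate for the remainder. Writing
$$\eta^{(h)}_j(\ux)=\tfrac{1}{2}\sum_{i,k=1}^n\bep_i\bep_k\,M^{(h)}_{j,ik}(\ux),$$
one has $E[\eta^{(h)}_j(\us)]=\tfrac{\sigma^2}{2}\,\mathrm{tr}\,M^{(h)}_j(\us)$ and $var[\eta^{(h)}_j(\us)]=\tfrac{\sigma^4}{2}\|M^{(h)}_j(\us)\|_F^2$, which explains the $\sigma^2$ and $\sigma^4$ prefactors in the claimed bounds. Each entry of $M^{(h)}_j$ is a second partial derivative with respect to the data; by repeated use of $\partial\bV^{-1}/\partial d_i=-\bV^{-1}(\partial\bV/\partial d_i)\bV^{-1}$ and of the chain rule through $\partial\zeta_h/\partial d_i$ (which itself carries one $\bV^{-1}$ factor, as in the calculation of $c_i$ in Theorem \ref{pos}), one accumulates three separate factors of the Vandermonde inverse. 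Since $\det\bV(\uzet)=\prod_{h<k}(\zeta_h-\zeta_k)$, squaring these three factors inside the Frobenius norm and grouping them in pairs produces the $\prod_{r\ne s}(\zeta_r-\zeta_s)^{6}$ in the denominator of the mean bound and its square in the variance bound, while the residual dependence on $\us=\bV\uf$ introduced by differentiating once more yields the extra $\prod_r f_r$ factor.

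The hardest step will be the careful bookkeeping of Vandermonde-inverse factors needed to pin down the exponent $6$ exactly and to verify that the numerator polynomials $F_1,F_2$ are free of $\uf$. A secondary difficulty is that $\ux$ in the Lagrange remainder is random, whereas the stated bounds concern $\eta^{(h)}_j$ at $\us$: this will be addressed by using the analyticity radius from \cite[Lemma 2]{j08} to get a uniform Cauchy estimate on $M^{(h)}_j$ over a fixed neighborhood of $\us$, so that evaluating at $\ux$ instead of $\us$ costs only a multiplicative constant absorbed into $F_1,F_2$.
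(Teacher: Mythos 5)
Your high-level strategy is the same as the paper's: analyticity of the eigenvalue map from \cite[Lemma 2]{j08}, a first-order Taylor expansion with Lagrange remainder, moments of the quadratic remainder via Isserlis (giving $E[\eta]=\tfrac{\sigma^2}{2}\mathrm{tr}\,M$ and $var[\eta]=\tfrac{\sigma^4}{2}\sum_{i,h}M_{ih}^2$), and implicit differentiation of $\us=V(\uzet)\uf$ to obtain the first-order coefficients and their independence of $\uf$. The gap is in the central quantitative step, which you only gesture at. The derivatives of $(\uf,\uzet)$ with respect to the data are governed by the $2p\times 2p$ Jacobian $W=[V\,\vdots\,\tilde V D_f]$, a confluent-Vandermonde-type matrix, not by the $p\times p$ Vandermonde $V(\uzet)$; your count of ``three factors of the Vandermonde inverse, each bounded through $\det V=\prod_{h<k}(\zeta_h-\zeta_k)$'' neither reproduces the exponent $6$ in $\prod_{r\ne s}(\zeta_r-\zeta_s)^6$ nor, more importantly, the single power of $\prod_r f_r$: a naive adjugate/determinant bound on three occurrences of $W^{-1}$ yields a denominator proportional to $\prod_r f_r^{3}\prod_{r\ne s}(\zeta_r-\zeta_s)^{6}$, which is strictly weaker than the claim. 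Your remark that ``the residual dependence on $\us=V\uf$ yields the extra $\prod_r f_r$ factor'' is also backwards: dependence on $\us$ puts $f$'s in numerators, while the $f$'s in the stated denominator come from the $D_f^{-1}$ block of $W^{-1}$, and the whole difficulty is to show that only one such power survives. The paper achieves this by the explicit inverse of the confluent Vandermonde, $[V\,\vdots\,\tilde V]^{-1}=D_W^{-1}X$ with $D_W=\mathrm{diag}(D^3,D^2)$ and $D_{jj}=\prod_{i\ne j}(\zeta_i-\zeta_j)$ (Hou--Pang \cite{shou}), combined with structural cancellations ($[V\,\vdots\,\tilde V]^{-1}\tilde V$ equal to the lower identity block, $X\check V=\tilde X D$, the fact that $D_f^{-1}$ enters only the lower block of $W^{-1}$, and that in $H_{jih}$ the second derivative $G_{jih}$ appears multiplied by $f_j$). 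Without an argument at this level of detail the exponents $6$ and $12$ and the powers $\prod_r f_r$, $\prod_r f_r^2$ are not established, and that is precisely the content of the theorem.

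A secondary issue: the moment bounds in the statement concern $\eta^{(h)}_j(\us)$, i.e.\ the quadratic form whose coefficients are the second derivatives evaluated at $\us$; the paper simply bounds these, so your extra step of a uniform Cauchy estimate over a neighborhood of $\us$ to pass from $\ux$ to $\us$ is not needed. As formulated it would also be delicate: the multiplicative constant from such an estimate depends on the analyticity radius, which itself degenerates as the $\zeta_r$ coalesce or the $f_r$ vanish, so it cannot simply be ``absorbed into $F_1,F_2$'' while keeping the stated powers of $\prod_r f_r$ and $\prod_{r\ne s}(\zeta_r-\zeta_s)$.
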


\begin{proof}
Let  $\Phi:\R^n\rightarrow\R^n$  be the map that associates to each $n-$vector the $n/2$ pairs corresponding to the projective form of the generalized eigenvalues of the pencil $(U_1,U_0)$ built from the $n-$vector. It was proved in \cite[Lemma 2]{j08} that $\Phi$ is analytic.
We can then consider the first order Taylor series expansions with remainder of $\bzet_j$ and $\bff_j$, as functions of $\bud$, around $\us$  (\cite[Th. B]{serf}):
$$\bzet_j=\zeta_j+\sum_{i=1}^n g_{ji} \bep_i+\frac{1}{2}\sum_{i,h=1}^n G_{jih}\bep_i\bep_h$$
$$\bff_j=f_j+\sum_{i=1}^n c_{ji} \bep_i+\frac{1}{2}\sum_{i,h=1}^n C_{jih}\bep_i\bep_h$$
where
$$g_{ji}=\frac{\partial \bzet_j}{\partial \bd_i}_{|\bd=\us},\;\;G_{jih}=\frac{\partial^2 \bzet_j}{\partial \bd_i\partial \bd_h}_{|\bd=\ux}$$
$$c_{ji}=\frac{\partial \bff_j}{\partial \bd_i}_{|\bd=\us},\;\;C_{jih}=\frac{\partial^2 \bff_j}{\partial \bd_i\partial \bd_h}_{|\bd=\ux}.$$
We notice that $g_{ji}=\frac{\partial \zeta_j}{\partial s_i}$ and analogously for $c_{ji},G_{jih},C_{jih}$.
Let us denote by $f_{ji}^{(h)}=\frac{\partial^{h} f_j}{\partial s_i^{h}}$ and $\zeta_{ji}^{(h)}=\frac{\partial^{h} \zeta_j}{\partial s_i^{h}}$.
Let be
$$V=\left[\begin{array}{llll}
1 & 1& \dots & 1 \\
\zeta_1 & \zeta_{2} &\dots &\zeta_{p} \\
\zeta_1^2 & \zeta_2^2 &\dots &\zeta_p^2 \\
. & . &\dots &. \\
\zeta_1^{n-1} & \zeta_2^{n-1} &\dots &\zeta_p^{n-1}
  \end{array}\right]$$
  and
$$V_i^{(1)}=\frac{\partial V}{\partial s_i}=\left[\begin{array}{llll}
0 & 0& \dots & 0 \\
\zeta_{1i}^{(1)} & \zeta_{2i}^{(1)} &\dots &\zeta_{pi}^{(1)} \\
2\zeta_1\zeta_{1i}^{(1)} & 2\zeta_2\zeta_{2i}^{(1)} &\dots &2\zeta_p\zeta_{pi}^{(1)} \\
. & . &\dots &. \\
(n-1)\zeta_1^{n-2}\zeta_{1i}^{(1)} & (n-1)\zeta_2^{n-2}\zeta_{2i}^{(1)} &\dots &(n-1)\zeta_p^{n-2}\zeta_{pi}^{(1)}
  \end{array}\right].$$
By derivating both members of  equation (\ref{sign}) with respect to $s_i$ we have
\begin{eqnarray}\frac{\partial \us}{\partial s_i}=\ue_i=\frac{\partial V \uf}{\partial s_i}=V_i^{(1)}\uf+V \uf_{i}^{(1)}=\tilde{V}D^{(1)}_{\zeta i}\uf+V \uf_{i}^{(1)}\label{der}\end{eqnarray}
where $D^{(1)}_{\zeta i}$ is the diagonal matrix built from $\zeta_{1i}^{(1)},\dots,\zeta_{pi}^{(1)}$ and
$$\tilde{V}=\left[\begin{array}{llll}
0 & 0& \dots & 0 \\
1 & 1 &\dots &1 \\
2\zeta_1 & 2\zeta_2 &\dots &2\zeta_p \\
. & . &\dots &. \\
(n-1)\zeta_1^{n-2} & (n-1)\zeta_2^{n-2} &\dots &(n-1)\zeta_p^{n-2}
  \end{array}\right].$$
 But then if $$\ute=[\uf^T,\uzet^T]^T$$ and $D_f$ is the diagonal matrix built from $f_1,\dots,f_p$, equation (\ref{der}) becomes
\begin{eqnarray}[V \;\vdots\;\tilde{V}D_f]\ute^{(1)}_i=W \ute^{(1)}_i=\ue_i.\label{der1}\end{eqnarray}
and therefore $$c_{ji}=\ue_j^T[I\;\vdots\;0]W^{-1}\ue_i,\;\;\;g_{ji}=\ue_j^T[0\;\vdots\;I]W^{-1}\ue_i.$$ We then have
$$h_{ji}=\frac{\partial(f_j\zeta_j)}{\partial s_i}=\ue_j^T(\zeta_j[I\;\vdots\;0]+f_j [0\;\vdots\;I])W^{-1}\ue_i.$$
But
$$W=[V \;\vdots\;\tilde{V}]\left[\begin{array}{llll}I & 0\\0 & D_f\end{array}\right],\;\;\;W^{-1}=\left[\begin{array}{llll}I & 0\\0 & D_f^{-1}\end{array}\right][V \;\vdots\;\tilde{V}]^{-1}$$
hence
$$c_{ji}=\ue_j^T[I\;\vdots\;0]\left[\begin{array}{llll}I & 0\\0 & D_f^{-1}\end{array}\right][V \;\vdots\;\tilde{V}]^{-1}\ue_i=
\ue_j^T[I\;\vdots\;0][V \;\vdots\;\tilde{V}]^{-1}\ue_i$$
 is a function of $\uzet$ only (it does not depend on $\uf$).
As  $$h_{ji}=([\zeta_j\ue_j^T\;\vdots\;0]+[0\;\vdots\;\ue_j^T])[V \;\vdots\;\tilde{V}]^{-1}\ue_i$$
 does not depend on $\uf$ we get the first part of the thesis.

\noindent Let be
$$G_{jih}=\frac{\partial^2 \bzet_j}{\partial \bd_i\partial \bd_h}_{|\bd=\us}$$
$$C_{jih}=\frac{\partial^2 \bff_j}{\partial \bd_i\partial \bd_h}_{|\bd=\us}$$
where, for simplicity, the same symbols as before were used, and let be
$$\eta^{(1)}_j(\us)=\frac{1}{2}\sum_{i,h=1}^n H_{jih}\bep_i\bep_h$$
$$\eta^{(2)}_j(\us)=\frac{1}{2}\sum_{i,h=1}^n C_{jih}\bep_i\bep_h$$
where
$$H_{jih}=C_{jih}\zeta_j+2 c_{ji} g_{ji}+f_j G_{jih}$$
because
$$\frac{\partial \bff_j\bzet_j}{\partial \bd_i}_{|\bd=\us}=c_{ji}\zeta_j+f_j g_{ji}.$$
But then
$$E[\eta^{(1)}_j(\us)]=\frac{\sigma^2 }{2}\mbox{ tr}(H_j),\;\;\;E[\eta^{(2)}_j|\ux]=\frac{\sigma^2 }{2} \mbox{ tr}(C_j)$$
and, by Isserlis's theorem,
$$Var[\eta^{(1)}_j(\us)]=\frac{\sigma^4}{2}\sum_{i,j}^{1,n}H_{jih}^2$$
$$Var[\eta^{(2)}_j(\us)]=\frac{\sigma^4}{2}\sum_{i,j}^{1,n}C_{jih}^2$$
To conclude the proof we need an expression for $C_{jih}$ and $G_{jih}$. If
$$\Gamma_{ih}=\frac{\partial^2 \ute}{\partial s_i\partial s_h}=[C_{*ih}^T \vdots G_{*ih}^T]^T$$
by e.g. \cite[Ch.5]{planc} we have
$$C_{jih}=\ue_j^T[I\;\vdots\;0] \Gamma_{ih}=\ue_j^T[I\;\vdots\;0]\frac{\partial}{\partial s_h}[W^{-1}\ue_i]=-\ue_j^T[I\;\vdots\;0] W^{-1}\frac{\partial W}{\partial s_h}W^{-1}\ue_i$$
and $$G_{jih}=-\ue_j^T[0\;\vdots\;I] W^{-1}\frac{\partial W}{\partial s_h}W^{-1}\ue_i$$
where
$$W^{(1)}=\frac{\partial W}{\partial s_h}=\left[\tilde{V}D^{(1)}_{\zeta i}\vdots \tilde{V}D^{(1)}_{f i} +  \check{V}D_fD^{(1)}_{\zeta i} \right]$$
and $$\check{V}=\left[\begin{array}{llll}
0 & 0& \dots & 0 \\
0 & 0 &\dots &0 \\
2 & 2 &\dots &2 \\
. & . &\dots &. \\
(n-1)(n-2)\zeta_1^{n-3} & (n-1)(n-2)\zeta_2^{n-3} &\dots &(n-1)(n-2)\zeta_p^{n-3}
  \end{array}\right]$$
  and $D^{(1)}_{\zeta i}$ and $D^{(1)}_{f i}$ are the diagonal matrices built respectively from $\zeta_{1i}^{(1)},\dots,\zeta_{pi}^{(1)}$ and $f_{1i}^{(1)},\dots,f_{pi}^{(1)}$.
  We now notice that the elements of $[V \;\vdots\;\tilde{V}]^{-1}$
are rational functions of $\zeta_1,\dots,\zeta_p$. More specifically by \cite{shou}
$$[V \;\vdots\;\tilde{V}]^{-1}=D_W^{-1}\cdot X,\;\;\;\;
 D_W=\left[\begin{array}{ll}
D^3 &  0 \\
0 & D^2 \end{array}\right]$$
where
$$D=\mbox{ diag}\left[\prod_{i\ne 1}(\zeta_i-\zeta_1),\dots,\prod_{i\ne p}(\zeta_i-\zeta_p)\right]$$  and the elements of $X$ are polynomials in $\zeta_1,\dots,\zeta_p$.
But \begin{eqnarray*}\mbox{ diag}\left[\ute^{(1)}_i\right]=\mbox{ diag}\left[W^{-1}\ue_i\right]=\left[\begin{array}{ll}
D^{(1)}_{f i} &  0 \\
0 & D^{(1)}_{\zeta i} \end{array}\right]= \\ \left[\begin{array}{llll}I & 0\\0 & D_f^{-1}\end{array}\right]\left[\begin{array}{ll}
D^{-3} &  0 \\
0 & D^{-2} \end{array}\right]
\mbox{ diag}\left[X\ue_i\right]= \\ \left[\begin{array}{llll}D^{-3}D_{Xfi} & 0\\0 & D_f^{-1}D^{-2}D_{X\zeta i}\end{array}\right]\end{eqnarray*}
and therefore
$$W^{(1)}=\left[\tilde{V}D_f^{-1}D^{-2}D_{X\zeta i}\vdots \tilde{V}D^{-3}D_{Xfi} +  \check{V}D^{-2}D_{X\zeta i} \right]=$$
$$\left[\tilde{V}D_{X\zeta i}\vdots \tilde{V}D_{Xfi} +  \check{V}D D_{X\zeta i} \right]
\left[\begin{array}{llll}D_f^{-1}D^{-2} & 0\\0 & D^{-3}\end{array}\right].$$
Let us consider the matrix equation in the unknown $B$
$$W B=W^{(1)}.$$
As the right block of $W$ and the left block of $W^{(1)}$ are both equal to $\tilde{V}$ times a diagonal matrix, $B$ must have the form
$$B=\left[\begin{array}{ll}
0 &  B_{12} \\
D_B & B_{22} \end{array}\right],\,\;\;D_B=D_f^{-2}D^{-2}D_{X\zeta i}$$
and
$$\left[\begin{array}{l}
  B_{12} \\
 B_{22} \end{array}\right]=
W^{-1}\left(\tilde{V}D^{-3}D_{Xfi} +  \check{V}D^{-2}D_{X\zeta i} \right)=$$ $$\left[\begin{array}{ll}
I &  0 \\
0 &D_f^{-1} \end{array}\right][V\vdots \tilde{V}]^{-1}\tilde{V}D^{-3}D_{Xfi}+\left[\begin{array}{ll}
I &  0 \\
0 &D_f^{-1} \end{array}\right][V\vdots \tilde{V}]^{-1}\check{V}D^{-2}D_{X\zeta i}=$$
$$\left[\begin{array}{l}
  0 \\
D_f^{-1} \end{array}\right]D^{-3}D_{Xfi}+\left[\begin{array}{ll}
D^{-3} &  0 \\
0 &D_f^{-1}D^{-2} \end{array}\right]\tilde{X}D^{-1}D_{X\zeta i}$$
because it turns out that
$$[V\vdots \tilde{V}]^{-1}\tilde{V}=\left[\begin{array}{l}
  0 \\
 I \end{array}\right] \mbox{     and     } X \check{V}=\tilde{X}D$$
 and the elements of $\tilde{X}$ are polynomials in $\zeta_1,\dots,\zeta_p$;
therefore
$$B_{12}=D^{-3}\tilde{X}_{1}D^{-1}D_{X\zeta i} $$
and
$$B_{22}=D_f^{-1}D^{-3}D_{Xfi}+D_f^{-1} D^{-2}\tilde{X}_{2} D^{-1}D_{X\zeta i} .$$
But then
$$W^{-1}\frac{\partial W}{\partial s_h}W^{-1}=B W^{-1}=\left[\begin{array}{ll}
0 &  B_{12} \\
D_B & B_{22} \end{array}\right]\left[\begin{array}{ll}
D^{-3} &  0 \\
0 &D_f^{-1} D^{-2} \end{array}\right]\left[\begin{array}{ll}
X_{11} &  X_{12} \\
X_{21} & X_{22} \end{array}\right]=$$
$$\left[\begin{array}{ll}
0 &  B_{12}D_f^{-1} D^{-2} \\
D_B D^{-3}& B_{22}D_f^{-1} D^{-2} \end{array}\right]\left[\begin{array}{ll}
X_{11} &  X_{12} \\
X_{21} & X_{22} \end{array}\right]=$$
$$\left[\begin{array}{ll}
B_{12}D_f^{-1} D^{-2}X_{21} &  B_{12}D_f^{-1} D^{-2}X_{22} \\
D_B D^{-3}X_{11}+B_{22}D_f^{-1} D^{-2}X_{21}& D_B D^{-3}X_{12}+B_{22}D_f^{-1} D^{-2}X_{22} \end{array}\right]=$$
$$\left[\begin{array}{ll}
U_{11} & U_{12}\\
U_{21} & U_{22}
\end{array}\right]$$
where
$$
U_{11} =D^{-3}\tilde{X}_{1}D^{-1}D_{X\zeta i} D_f^{-1} D^{-2}X_{21} $$
$$U_{12}=  D^{-3}\tilde{X}_{1}D^{-1}D_{X\zeta i} D_f^{-1} D^{-2}X_{22} $$
$$U_{21}=
D_B D^{-3}X_{11}+\left(D_f^{-1}D^{-3}D_{Xfi}+D_f^{-1} D^{-2}\tilde{X}_{2} D^{-1}D_{X\zeta i}\right)D_f^{-1} D^{-2}X_{21}$$
$$ U_{22}=D_B D^{-3}X_{12}+\left(D_f^{-1}D^{-3}D_{Xfi}+D_f^{-1} D^{-2}\tilde{X}_{2} D^{-1}D_{X\zeta i}\right)D_f^{-1} D^{-2}X_{22} $$
Remembering that $\zeta_1,\dots,\zeta_p\in (0,1)$, it follows that $C_{jih}$ and $H_{jih}$ are rational functions such that the numerators are  polynomials in $\zeta_1,\dots,\zeta_p$ and a lower bound for the denominators is
$\prod_{r=1}^p f_r\prod_{r\ne s}(\zeta_r-\zeta_s)^6$ because some further simplification of common factors such as $(\zeta_r-\zeta_s)$ in the numerator and denominator can occur. This fact follows easily for $C_{jih}$ while for  $H_{jih}$  we remember that $$H_{jih}=C_{jih}\zeta_j+2 c_{ji} g_{ji}+f_j G_{jih}$$ and we notice that $U_{21}$ and $U_{22}$ are left multiplied by $D_f^{-1}$, therefore $f_jG_{jih}$ is a rational function such that the numerator is a polynomial in $\zeta_1,\dots,\zeta_p$
 and a lower bound for the denominator is $\prod_{r=1}^p f_r\prod_{r\ne s}(\zeta_r-\zeta_s)^5$.  Moreover as   $c_{ji}, g_{ji}$ do not depend on $\uf$ the claim follows as well as the thesis.
\end{proof}

As a final remark we notice that when the densities of $\bff_j\bzet_j$ and $\bff_j$ are approximately Gaussian, also their joint density is approximately Gaussian, because the density of $\bzet_j|\bff_j$ is approximately Gaussian too.

\section{Simulation results}

In order to illustrate the possible advantages of the proposed kernel estimator, the following MonteCarlo simulation was performed. $N=10^5$ independent realizations of a noisy multiexponential signal of length $n=126$ with three components
$$\bd_k^{(r)}=\sum_{j=1}^3 \zeta_j^{k-1}+\bep_k^{(r)},\;\; \uzet=[0.8,0.9,0.95],\;\;\sigma=1.5\cdot 10^{-3},\;k=1,\dots,n,\;r=1,\dots,N$$
were considered.  For  $r=1,\dots,N$ the $p=n/2$  generalized eigenvalues were computed as well as their empirical condensed density that was taken as the reference distribution that we want to estimate starting from the first $R=250$ samples $\bd_k^{(r)},\;r=1,\dots,R$.  The noise standard deviation $\sigma$ was chosen large enough to make at least one of the three modes hardly detectable by visual inspection in the empirical condensed density based on $R=250$ samples and small enough to make the three modes visually detectable in the reference condensed density. The number of observations was chosen as a function of $\sigma$ by the rule
$$n=\mbox{argmin}_k\{k| \;|d_k|<\sigma\}$$ as a compromise between the opposite requirements of a large sample size and a small total noise.

In the top part of Fig.1 the reference distribution evaluated in $256$ bins of equal size in the interval $(0.75,1)$ was plotted (right) as well as the empirical condensed density based on the first $R=250$ samples (left).
The kernel estimator $\hat{H}_G(x,t^+)$ was evaluated in $256$ equispaced points in the interval $(0.75,1)$ where $t^+$ is the estimated optimal bandwidth; the software downloadable by \cite{soft} was used and the result is plotted in Fig.1 (bottom left).
The kernel estimator (\ref{Pest}) was evaluated in the same points and plotted in Fig.1 (bottom  right). The estimated bandwidths were $t_0=1.1\cdot10^{-1},\;t^+=1.12\cdot 10^{-2}.$ We stress that in this problem what matters are the modes of the density because they are estimates of the generalized eigenvalues. A smooth estimate with the correct number of modes even if slightly displaced w.r. to the true values is much better than  an estimate with many modes not related to the true ones. Therefore
 we can conclude that the proposed estimate is much closer in a suitable Sobolev norm to the reference distribution than that based on standard diffusion.  Moreover if we compute the relative maxima of the proposed estimate above e.g. a threshold $\tau=2$ we get the modes $[0.82,0.88,  0.95 ]$ which are reasonable estimates of the true values $\uzet=[0.8,0.9,0.95]$.

 To stress the proposed method, a second example was considered where the signal has more and closer components. Moreover $\sigma$ was chosen large enough to make one of the modes visually undetectable even in the reference density. The multiexponential signal of length $n=324$ with five components was considered:
$$\bd_k^{(r)}=\sum_{j=1}^5 f_j\zeta_j^{k-1}+\bep_k^{(r)}$$
$$\uzet=[0.88,0.9,0.91,0.92,0.94],\;\;\uf=[1,10,10,10,1]$$
$$\sigma=2\cdot10^{-9},\;k=1,\dots,n,\;r=1,\dots,N$$
As before, $R=250$ samples were used. All the distributions were now evaluated in $2^{13}$ points in the interval $(0.85,0.96)$ and plotted in Fig.2. The estimated bandwidths were $t_0=3.8\cdot10^{-3},\;t^+=1.75\cdot 10^{-4}.$ In the reference distribution  one mode is lost, while the relative maxima above the threshold $\tau=2$ of the proposed estimate are
$[0.880, 0.902, 0.907, 0.919, 0.940 ].$

\section{Conclusions}

  The mathematical structure of the density of the ratio of Gaussian variables given by a partial differential equation has been revealed and exploited to solve a classical ill posed problem. The quality of the solution is definitely better than the one provided by classical methods. Moreover it turns out that, given a sample of observations of moderate size, the quality of the solution can be  better than the one obtained by a very large sample. The results are apparently robust with respect to the Normality hypothesis. It is reasonable to expect  that similar benefits can be obtained by exploiting the mathematical structure for solving other problems where the ratio of random variables plays an important role.



\begin{figure}
\begin{center}
\hspace{1.cm}{\fbox{\epsfig{file=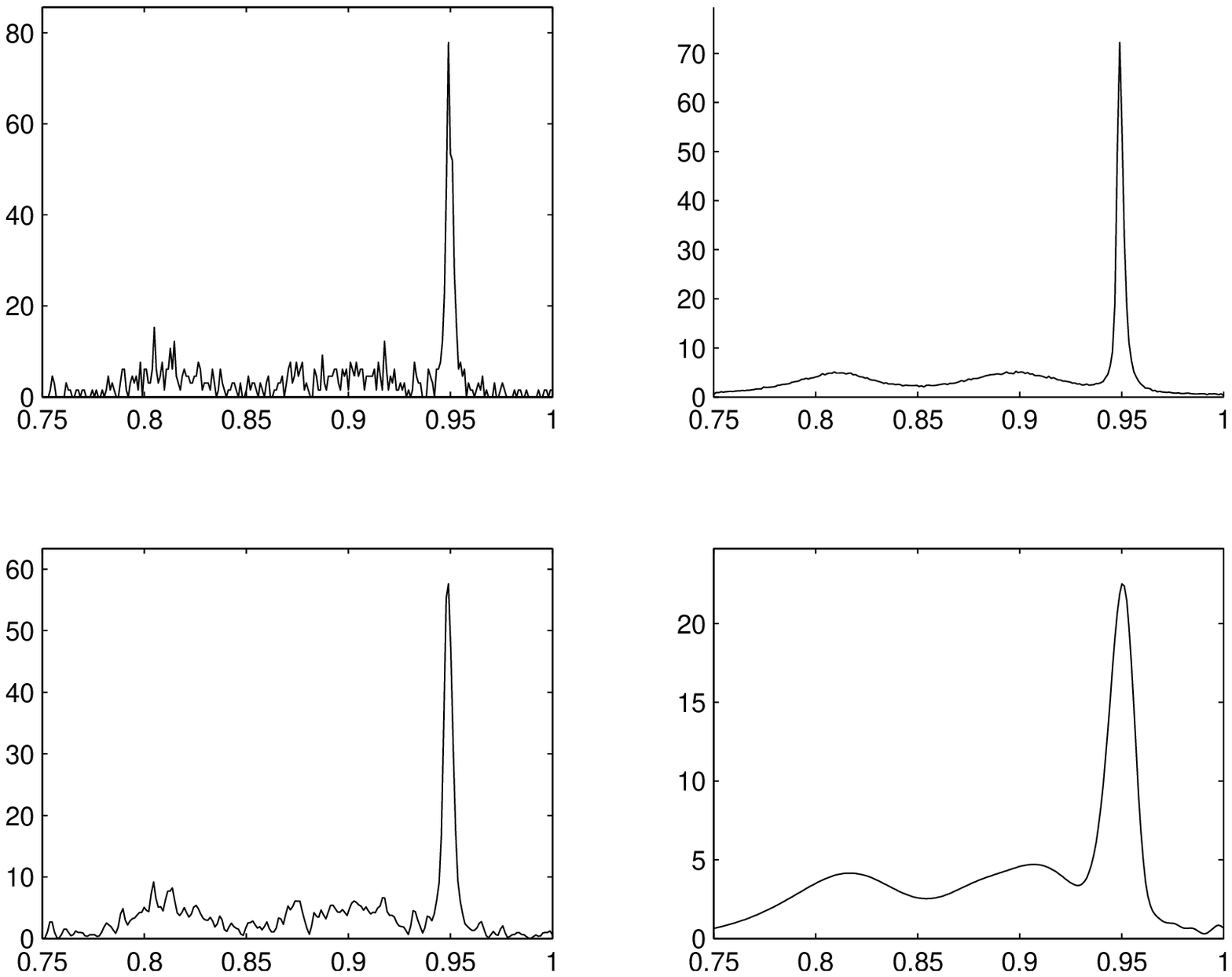,height=12cm,width=12cm}}}
\end{center}
\caption{Model 1. Top left: empirical distribution of the generalized eigenvalues based on $250$ data samples, evaluated on $256$ bins of equal size; top right: empirical distribution of the generalized eigenvalues based on $10^5$ data samples evaluated on the same bins; bottom left: Gaussian kernel density estimation based on $250$  samples, evaluated in $256$ equispaced points; bottom right: proposed kernel density estimation  based on $250$ samples, evaluated in the same points.
} \label{fig1}
\end{figure}

\begin{figure}
\begin{center}
\hspace{1.cm}{\fbox{\epsfig{file=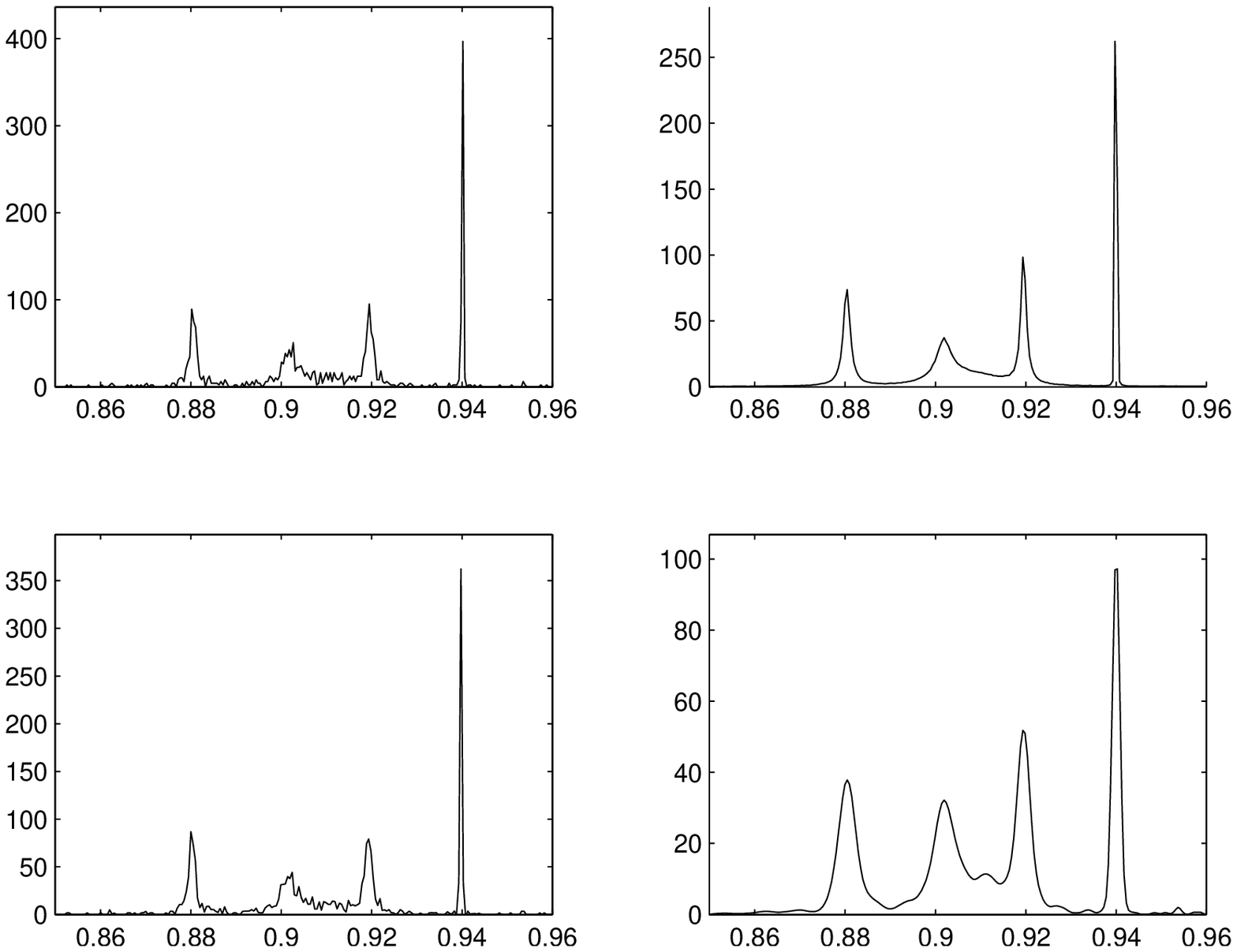,height=12cm,width=12cm}}}
\end{center}
\caption{Model 2. Top left: empirical distribution of the generalized eigenvalues based on $250$ data samples, evaluated on $2^{13}$ bins of equal size; top right: empirical distribution of the generalized eigenvalues based on $10^5$ data samples evaluated on the same bins; bottom left: Gaussian kernel density estimation based on $250$  samples, evaluated in $2^{13}$ equispaced points; bottom right: proposed kernel density estimation  based on $250$ samples, evaluated in the same points.
} \label{fig2}
\end{figure}

\end{document}